\newtheorem{theorem}{Theorem}[section]
\newtheorem{corollary}[theorem]{Corollary}
\newtheorem{proposition}[theorem]{Proposition}
\theoremstyle{definition}
\newtheorem{definition}[theorem]{Definition}
\theoremstyle{remark}
\numberwithin{equation}{section}
\begin{document}
\title{The Invariance and the General CCT Theorems\\ }
\subjclass[2000]{Primary: 18, Secondary: 16. \\Keywords:
Derived Category, Cohomology Comparison Theorem, Yoneda Cohomology, Hochschild Cohomology}
\author[Alin  Stancu]{\textbf{Alin  Stancu}\\
Department of Mathematics, Columbus State University,\\ Columbus, GA
31907, USA\\}

\email{stancu\_alin1@colstate.edu}

\date{}

\begin{abstract}

The \begin{it} Invariance Theorem \end{it} of M. Gerstenhaber and S. D. Schack states that if $\mathbb{A}$ is a diagram of algebras then the subdivision functor induces a natural isomorphism between the Yoneda cohomologies of the category $\mathbb{A}$-$\mathbf{mod}$ and its subdivided category $\mathbb{A}'$-$\mathbf{mod}$. In this paper we generalize this result and show that the subdivision functor is a full and faithful functor between two suitable derived categories of  $\mathbb{A}$-$\mathbf{mod}$ and $\mathbb{A}'$-$\mathbf{mod}$. This result combined with our work in [5] and [6], on the $Special$ $Cohomology$ $Comparison$ $Theorem$, constitutes a generalization of M. Gerstenhaber and S. D. Schack's $General$ $Cohomology$ $Comparison$ $Theorem$ ($\mathbf{CCT}$).

\end{abstract}

\maketitle

\newpage


\section{Preliminaries}

Let $k$ be a commutative ring and $\mathcal{C}$ be a small category, $i.e$ a category whose class of objects is a set. The objects of $\mathcal{C}$
will be denoted $h, i, j,$ etc. and the maps will be $u, v, w,$ etc. We will write $\mathrm{Hom}_{\mathcal{C}}(i, j)$ for the set of maps $i\rightarrow j$ and denote the domain and the codomain of a map $v$ by $dv$ and $cv$, respectively. A $\mathbf{delta}$ is a
small category in which the only endomorphisms are identity maps and the condition $\mathrm{Hom}_{\mathcal{C}}(i, j)\neq 0$ implies
$\mathrm{Hom}_{\mathcal{C}}(j, i)=0$, for all $i\neq j$ in $\mathcal{C}$.

When $A$ is a $k$-algebra and
$M$ any $A$ bimodule we assume $M$ to be symmetric over $k$. ($i.e.$
$ax=xa$ for all $x\in M$ and $a\in k$.) The category of associative unital $k$-algebras will be denoted by
$k$-${\mathbf{alg}}$.
A $\mathbf{presheaf}$ of $k$-algebras
over $\mathcal{C}$ is a contravariant functor
$\mathbb{A}:\mathcal{C}^{op}\longrightarrow k$-${\mathbf{alg}}$. We
will denote $\mathbb{A}(i)$ by $\mathbb{A}^i$ and write $\varphi^v=\varphi_{\mathbb{A}}^v:\mathbb{A}^{cv}\rightarrow\mathbb{A}^{dv}$ for $\mathbb{A}(v)$.
We will adopt the terminology of [2] and will call  $\mathbb{A}$ a $\mathbf{diagram}$ over $\mathcal{C}$.

Let $\mathbb{A}$ be a diagram over $\mathcal{C}$ and $v\in\mathcal{C}$ be a map. The map $
\varphi_{\mathbb{A}}^v:\mathbb{A}^{cv}\rightarrow\mathbb{A}^{dv}$ makes any $\mathbb{A}^{dv}$-module $M$ an $\mathbb{A}^{cv}$ module.
The resulting module will be denoted by $|M|_v$. A $\mathbf{left}$ $\mathbb{A}$-$\mathbf{module}$ $\mathbb{M}$ is a presheaf (over $\mathcal{C}$)
of abelian groups such that:

1. $\mathbb{M}^i$ is an $\mathbb{A}^i$-module.

2. If $v\in\mathcal{C}$ then $T^v=T_{\mathbb{M}}^v:\mathbb{M}^{cv}\rightarrow|\mathbb{M}^{dv}|_v$ is an $\mathbb{A}^{cv}$-module map.

An $\mathbb{A}$-module map
$\eta:\mathbb{M}\longrightarrow\mathbb{N}$ is a natural
transformation in which $\eta^{i}$ is an $\mathbb{A}^{i}$-module
map $(\forall) i\in\mathcal{C}$. The category of left $\mathbb{A}$-modules will be denoted by $\mathbb{A}$-$\mathbf{mod}$. It is abelian, complete, and cocomplete.
(All constructions are made ``objectwise".) The definitions of right modules and bimodules  are similar and with the $\mathbf{opposite}$ $\mathbf{diagram}$ $\mathbb{A}^{op}$ and the $\mathbf{enveloping}$ $\mathbf{diagram}$ $\mathbb{A}^{e}=\mathbb{A}\otimes_k\mathbb{A}^{op}$ defined in the obvious way we have that the category of right $\mathbb{A}$-modules, $\mathbf{mod}$-$\mathbb{A}$ and that of $\mathbb{A}$-bimodules, $\mathbb{A}$-$\mathbf{bimod}$, are isomorphic to $\mathbb{A}^{op}$-$\mathbf{mod}$ and $\mathbb{A}^e$-$\mathbf{mod}$.

If $f:\mathcal{D}\rightarrow\mathcal{C}$ is a covariant functor between small categories then every diagram $\mathbb{A}:\mathcal{C}^{op}\rightarrow k$-$\mathbf{alg}$ over $\mathcal{C}$ defines a diagram $f^{*}\mathbb{A}:\mathcal{D}^{op}\rightarrow k$-$\mathbf{alg}$ over $\mathcal{D}$ by setting
$(f^{*}\mathbb{A})^{\sigma}=\mathbb{A}^{f\sigma}$ and $\varphi_{f^{*}\mathbb{A}}^v=\varphi_\mathbb{A}^{fv}$.
Moreover, the functor $f$ induces a functor $f^{*}:\mathbb{A}$-$\mathbf{mod}$ $\rightarrow (f^{*}\mathbb{A})$-$\mathbf{mod}$ by setting $(f^{*}\mathbb{N})^\sigma=\mathbb{N}^{f\sigma}$,
$T_{f^{*}\mathbb{N}}^v=T_{\mathbb{N}}^{fv}$, and $(f^*\eta)^\sigma=\eta^{f\sigma}$. Note that $f^*$ is an exact embedding.

In [3] M. Gerstenhaber and S. D. Schack proved that the functor $f^*$  has both a left and a right adjoint. Because we will use the left adjoint to prove a generalization of the $Subdivision$ $Theorem$ we include  M. Gerstenhaber and S. D. Schack's description.

Let $f:\mathcal{D}\rightarrow\mathcal{C}$ be a functor as above and $i$ an object in $\mathcal{C}$. Then the $\mathbf{comma}$ $\mathbf{category}$ $\mathbf{i/f}$ is the category whose objects are the $\mathcal{C}$-maps $\xymatrix{ i\ar[r]^{w}&f\sigma}$, where $\sigma\in\mathcal{D}$. Such an object will be denoted by $(w, \sigma)$. A map $(u,\tau)\rightarrow (w,\sigma)$ in $\mathbf{i/f}$ is simply a $\mathcal{D}$-map $\tau\rightarrow\sigma$ such that $u(fv)=w$.
Each $\mathcal{C}$ map $\xymatrix{h\ar[r]^v&i}$ induces a functor $\mathbf{i/f}\rightarrow \mathbf{h/f}$ described on objects by $(w,\sigma)\rightarrow(vw,\sigma)$.

If $v\in\mathcal{C}$ then, using the map $\varphi^v:\mathbb{A}^{cv}\rightarrow\mathbb{A}^{dv}$, we may view $\mathbb{A}^{dv}$ as a left $\mathbb{A}^{dv}$, right
$\mathbb{A}^{cv}$ module. This implies that if $M$ is any left $\mathbb{A}^{cv}$-module then $\mathbb{A}^{dv}\otimes_{\mathbb{A}^{cv}}M$ is a left $\mathbb{A}^{dv}$ module. To make the role of $v$ explicit we denote this module by $\mathbb{A}^{dv}\otimes_vM$. For $a\in\mathbb{A}^{dv}$,  $b\in\mathbb{A}^{cv}$ and $m\in M$, we have $a\otimes bm=a(\varphi^vb)\otimes m$.

The left adjoint of $f^*$ is denoted by $f_!:(f^*\mathbb{A})$-$\mathbf{mod}\rightarrow\mathbb{A}$-$\mathbf{mod}$ and defined as follows.
 Let $\mathbb{N}$ be an $(f^*\mathbb{A})$-module. For each $i\in\mathcal{C}$ and each $\xymatrix{(u,\tau)\ar[r]^v&(w,\sigma)}$ in $\mathbf{i/f}$ the map $Id\otimes T_{\mathbb{N}}^v:\mathbb{A}^i\otimes_w\mathbb{N}^{\sigma}\rightarrow\mathbb{A}^i\otimes_u\mathbb{N}^\tau$ is $\mathbb{A}^i$-linear.

The collection of all these maps defines a diagram of $\mathbb{A}^i$-modules over $\mathbf{i/f}$ by setting $$\displaystyle(f_!\mathbb{N})^i=\mathrm{colim}_{(w,\sigma)\in\mathbf{i/f}}\mathbb{A}^i\otimes_w\mathbb{N}^\sigma$$

Also, for each $v\in\mathrm{Hom}_\mathcal{C}(h,i)$ and $(w,\sigma)\in\mathbf{i/f}$ there is an $\mathbb{A}^i$-module map $\varphi_\mathbb{A}^v\otimes Id:\mathbb{A}^i\otimes_w\mathbb{N}^\sigma\rightarrow\mathbb{A}^h\otimes_{vw}\mathbb{N}^\sigma$. The universal property of colimits implies that the functor $\mathbf{i/f}\rightarrow\mathbf{h/f}:(w,\sigma)\rightarrow (vw,\sigma)$ induces an $\mathbb{A}^i$-module map $T^v:(f_!\mathbb{N})^i\rightarrow(f_!\mathbb{N})^h$
and that $T^uT^v=T^{uv}$. This implies that these modules and maps form an $\mathbb{A}$-module $f_!\mathbb{N}$. By the universality property of colimits each $f^*\mathbb{A}$-module map $\mathbb{N}\rightarrow\mathbb{M}$ induces an $\mathbb{A}$-module map $f_!\mathbb{N}\rightarrow f_!\mathbb{M}$, so $f_!$ is a functor.
For a proof that $f_!$ is a left adjoint of $f^*$ the reader could see [2].

Yoneda cohomology of the category $\mathbb{A}$-$\mathbf{mod}$ is closely related to the notion  of
``allowable'' map. These maps will also play an important role in defining the relative derived category of $\mathbb{A}$-$\mathbf{mod}$, so we
remind the reader their definition.
A map $\eta:\mathbb{M}\longrightarrow\mathbb{N}$ is called $\mathbf{allowable}$
if $(\forall) i\in\mathcal{C}$ the map
$\eta^{i}:\mathbb{M}^{i}\longrightarrow\mathbb{N}^{i}$ admits a
$k$-module splitting map
$k^{i}:\mathbb{N}^{i}\longrightarrow\mathbb{M}^{i}$ satisfying
$\eta^{i}k^{i}\eta^{i}=\eta^{i}$. We do not require the splitting
maps $k^{i}$ to be natural. An $\mathbb{A}$-module $\mathbb{P}$ is
called $\mathbf{relative}$ $\mathbf{ projective}$ if for every allowable
epimorphism $\mathbb{M}\longrightarrow\mathbb{N}$ the induced map
$\mathrm{Hom}_{\mathbb{A}}(\mathbb{P},\mathbb{M})\longrightarrow
\mathrm{Hom}_{\mathbb{A}}(\mathbb{P},\mathbb{N})$ is an
epimorphism of sets.

A $\mathbf{relative}$ $\mathbf{projective}$ $\mathbf{allowable}$
$\mathbf{resolution}$ of an $\mathbb{A}$-module $\mathbb{M}$ is an
exact sequence
$\cdots\longrightarrow\mathbb{P}_{n}\cdots\longrightarrow\mathbb{P}_{1}
\longrightarrow\mathbb{P}_{0}\longrightarrow\mathbb{M}\longrightarrow0$
in which all $\mathbb{P}_n$ are relative projective $\mathbb{A}$-modules and all maps are allowable. The category
$\mathbb{A}$-$\mathbf{mod}$ has enough relative projective modules and each
module has a relative projective allowable resolution. Moreover,
there is a functorial way of getting this type of resolutions.
The construction of such a resolution is due to M. Gerstenhaber and S.
D. Schack (see [2]). They called it the $\mathbf{gereralized}$ $\mathbf{simplicial}$ $\mathbf{bar}$ $\mathbf{(GSB)}$ $\mathbf{resolution}$.

\section{The Subdivision of a Category }

Let $\mathcal{C}$ be a small category. If $[\mathbf{p}]$ is the linearly ordered set $\{0<\dots<p\}$ viewed as a category then a $p-\mathbf{simplex}$  is a covariant functor $\sigma:[\mathbf{p}]\rightarrow\mathcal{C}$. In this case we say that the dimension of $\sigma$ is p and we write dim$\sigma$=$p$. A functor $f:[\mathbf{p}]\rightarrow [\mathbf{q}]$ is called $\mathbf{monotone}$ if and only if
$i<j$ implies $fi<fj$.

Every small category $\mathcal{C}$ has a $\mathbf{subdivision}$ $\mathcal{C}'$ which is again a category. The objects of the subdivision $\mathcal{C}'$ are the simplices of the category $\mathcal{C}$. To define the maps let $\tau$ and $\sigma$ are objects in $\mathcal{C}'$ such that $\mathrm{dim}\tau=p$ and $\mathrm{dim}\sigma=q$. A map $\tau\rightarrow\sigma$ in $\mathcal{C}'$ is a triple $[\tau, \sigma, v]$ where $\tau$ is the domain, $\sigma$ the codomain, and $v$ a map in $\mathcal{C}$ such that there exists a monotone functor $f:[\mathbf{q}]\rightarrow [\mathbf{p}]$ such that the triangle
$$\xymatrix{& [\mathbf{q}]\ar[dl]_(.5)f\ar[dr]^\sigma \\ [\mathbf{p}]\ar[rr]^\tau &&\mathcal{C}}$$ commutes and $v=\tau^{0,f0}=\tau(0\rightarrow f0)=\tau(0)\rightarrow\sigma(0)$.

Note that if $\mathrm{dim}\tau<\mathrm{dim}\sigma$ then there are no maps $\tau\rightarrow\sigma$. The composition is written in diagrammatic order
and is given by $[\tau, \sigma, u]\circ [\sigma, \omega, v]=[\tau, \omega, uv]$.
It is not hard to see that the  following proposition is true.
\begin{proposition} If $\mathcal{C}$ is a small category then $\mathcal{C}'$ is a delta. In addition, if $\mathcal{C}$ is a delta then $\mathcal{C}'$ is a poset.
In particular, the second subdivision $\mathcal{C}''$ of a small category $\mathcal{C}$ is a poset.
\end{proposition}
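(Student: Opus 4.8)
The plan is to reduce the whole statement to a single elementary fact about finite ordinals: any strictly monotone functor $[\mathbf{p}]\to[\mathbf{p}]$ is the identity (its values form a strictly increasing sequence of $p+1$ elements of $\{0,\dots,p\}$, hence must be $0,1,\dots,p$), and a composite of strictly monotone functors is strictly monotone. Everything else is bookkeeping with the triples $[\tau,\sigma,v]$.

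First I would record that the identity map of an object $\sigma$ of $\mathcal{C}'$ with $\mathrm{dim}\,\sigma=p$ is the triple $[\sigma,\sigma,\mathrm{id}_{\sigma(0)}]$ (it acts as a two-sided unit under the composition law $[\tau,\sigma,u]\circ[\sigma,\omega,v]=[\tau,\omega,uv]$), and that this is the only endomorphism of $\sigma$: an endomorphism $[\sigma,\sigma,v]$ comes from a monotone $f\colon[\mathbf{p}]\to[\mathbf{p}]$ with $\sigma f=\sigma$ and $v=\sigma^{0,f0}$, and the elementary fact forces $f=\mathrm{id}$, hence $f0=0$ and $v=\sigma^{0,0}=\mathrm{id}_{\sigma(0)}$. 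Next I would establish antisymmetry: if $[\tau,\sigma,v]\colon\tau\to\sigma$ and $[\sigma,\tau,w]\colon\sigma\to\tau$ both exist, realized by monotone $f\colon[\mathbf{q}]\to[\mathbf{p}]$ and $g\colon[\mathbf{p}]\to[\mathbf{q}]$ with $\tau f=\sigma$ and $\sigma g=\tau$, then $fg$ and $gf$ are strictly monotone self-maps, hence both identities; so $f$ is a bijection, $p=q$, and $f=\mathrm{id}$, whence $\sigma=\tau f=\tau$. Together with the endomorphism computation this shows $\mathcal{C}'$ is a delta, for any small category $\mathcal{C}$.

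Now suppose $\mathcal{C}$ is a delta. Since $\mathcal{C}'$ is already known to be a delta, it remains only to see that $\mathcal{C}'$ is thin, i.e. that the third component $v$ of a map $[\tau,\sigma,v]$ is determined by $\tau$ and $\sigma$. If $f,f'\colon[\mathbf{q}]\to[\mathbf{p}]$ both realize maps $\tau\to\sigma$, with say $f0\le f'0$, then $\sigma(0)=\tau(f0)=\tau(f'0)$, so $\tau^{f0,f'0}$ is an endomorphism of $\sigma(0)$ and hence $\mathrm{id}_{\sigma(0)}$ because $\mathcal{C}$ is a delta; applying $\tau$ to $0\le f0\le f'0$ then gives $\tau^{0,f'0}=\tau^{f0,f'0}\circ\tau^{0,f0}=\tau^{0,f0}$, so the two candidate values of $v$ coincide. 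Hence there is at most one map $\tau\to\sigma$, and $\mathcal{C}'$ is a poset. The final assertion follows at once: $\mathcal{C}'$ is a delta by the first part, so the second part applied to $\mathcal{C}'$ shows $\mathcal{C}''=(\mathcal{C}')'$ is a poset.

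I expect the only genuinely delicate point to be the thinness argument in the delta case. The subtlety is that the monotone functor $f$ realizing a map $\tau\to\sigma$ need not be unique, so one cannot simply read off $v$ from $f$; the content of the argument is that all admissible choices of $f$ produce the same $v$, and this is precisely where the ``only identity endomorphisms'' half of the delta hypothesis on $\mathcal{C}$ gets used.
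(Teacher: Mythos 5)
Your proof is correct. The paper offers no argument for this proposition (it is dismissed with ``it is not hard to see''), so there is nothing to compare against; your reduction to the fact that a strictly monotone self-functor of $[\mathbf{p}]$ is the identity cleanly yields both the delta property and antisymmetry, and you correctly isolate the one nontrivial point --- that thinness of $\mathcal{C}'$ requires the ``only identity endomorphisms'' half of the delta hypothesis on $\mathcal{C}$ to show that all monotone realizations $f$ of a map $\tau\to\sigma$ produce the same third component $v=\tau^{0,f0}$.
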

The subdivision of a small category $\mathcal{C}$ induces a functor $\mathbf{d}:\mathcal{C}'\rightarrow\mathcal{C}$ defined on objects by $\mathbf{d}\tau=\tau(0)$ and on maps by
$\mathbf{d}[\tau, \sigma, v]=v$. Each functor $f:\mathcal{D}\rightarrow\mathcal{C}$ induces a functor $f':\mathcal{D}'\rightarrow\mathcal{C}'$ by taking
$f'(\xymatrix{[\mathbf{p}]\ar[r]^\tau & \mathcal{D}})$ to be the composite $\xymatrix{[\mathbf{p}]\ar[r]^\tau & \mathcal{D}\ar[r]^f & \mathcal{C}}$ and
$f'([\tau, \sigma, v])=[f'(\tau), f'(\sigma), f(v)]$. The subdivision is functor from the category of small categories to itself and $\mathbf{d}$ is a natural transformation from this functor to the identity functor.

In [2] M. Gerstenhaber and S. D. Schack used the functor $\mathbf{d}:\mathcal{C}'\rightarrow\mathcal{C}$ to ``subdivide" any diagram $\mathbb{A}$ over $\mathcal{C}$ obtaining a new diagram over $\mathcal{C}'$, $\mathbf{d}^{*}\mathbb{A}=\mathbb{A}'$, as follows:
 \begin{center} $(\mathbb{A}')^\tau=\mathbb{A}^{d\tau} $\;$ $\;$\mathrm{and} $\;$ $\;$ \varphi_{\mathbb{A}'}^{[\tau, \sigma, v]}=\varphi_{\mathbb{A}}^{d[\tau, \sigma, v]}=\varphi_\mathbb{A}^v$\end{center} It follows from the general case described in the previous section that the induced functor $\mathbf{d}^*:\mathbb{A}$-$\mathbf{mod}$$\rightarrow\mathbb{A}'$-$\mathbf{mod}$ preserves allowability and has a left adjoint $\mathbf{d}_!$ which preserves relative projectives. It is not hard to see that $\mathbf{d}^*$ is full and faithful, so we have $\mathbf{d}_!\mathbb{M}'\cong\mathbb{M}$. In fact,
  M. Gerstenhaber and S. D. Schack proved the more general result:
  \newpage
  \begin{theorem} \begin{bf} The Invariance Theorem \end{bf}

  The natural transformation induced by $\mathbf{d}^*$ induces and isomorphism $$\mathbf{Ext^\bullet_{\mathbb{A},k}}(-, -)\rightarrow\mathbf{Ext^\bullet_{\mathbb{A'},k}}((-)', (-)')$$ \end{theorem}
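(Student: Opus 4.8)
The plan is to compute both Yoneda cohomologies by relative homological algebra and to move a resolution across the adjunction $\mathbf{d}_!\dashv\mathbf{d}^{*}$. Since $\mathbb{A}$-$\mathbf{mod}$ and $\mathbb{A}'$-$\mathbf{mod}$ have enough relative projectives and every module has a relative projective allowable resolution (the GSB resolution), the groups $\mathbf{Ext}^{\bullet}_{\mathbb{A},k}(\mathbb{M},\mathbb{N})$ and $\mathbf{Ext}^{\bullet}_{\mathbb{A}',k}(\mathbb{M}',\mathbb{N}')$ are computed, respectively, as the cohomology of $\mathrm{Hom}_{\mathbb{A}}(\mathbb{P}_{\bullet},\mathbb{N})$ and of $\mathrm{Hom}_{\mathbb{A}'}(\mathbb{Q}_{\bullet},\mathbb{N}')$ for \emph{any} relative projective allowable resolutions $\mathbb{P}_{\bullet}\to\mathbb{M}$ and $\mathbb{Q}_{\bullet}\to\mathbb{M}'$, the outcome being independent of the resolution chosen.

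First I would take the (functorial) GSB resolution $\mathbb{Q}_{\bullet}\to\mathbb{M}'$ of $\mathbb{M}'=\mathbf{d}^{*}\mathbb{M}$ in $\mathbb{A}'$-$\mathbf{mod}$ and apply $\mathbf{d}_!$. I then claim $\mathbf{d}_!\mathbb{Q}_{\bullet}\to\mathbb{M}$ is a relative projective allowable resolution of $\mathbb{M}$: each $\mathbf{d}_!\mathbb{Q}_n$ is relative projective because $\mathbf{d}_!$ preserves relative projectives; the augmentation target is $\mathbf{d}_!\mathbb{M}'=\mathbf{d}_!\mathbf{d}^{*}\mathbb{M}\cong\mathbb{M}$ since $\mathbf{d}^{*}$ is full and faithful (the counit of $\mathbf{d}_!\dashv\mathbf{d}^{*}$ is an isomorphism); and the complex is exact with allowable differentials provided $\mathbf{d}_!$ carries the allowable exact complex $\mathbb{Q}_{\bullet}\to\mathbb{M}'$ to an allowable exact complex — this last point is the crux, discussed below. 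Granting it,
\begin{align*}
\mathbf{Ext}^{\bullet}_{\mathbb{A},k}(\mathbb{M},\mathbb{N})
&=H^{\bullet}\bigl(\mathrm{Hom}_{\mathbb{A}}(\mathbf{d}_!\mathbb{Q}_{\bullet},\mathbb{N})\bigr)
\cong H^{\bullet}\bigl(\mathrm{Hom}_{\mathbb{A}'}(\mathbb{Q}_{\bullet},\mathbf{d}^{*}\mathbb{N})\bigr)\\
&=\mathbf{Ext}^{\bullet}_{\mathbb{A}',k}(\mathbb{M}',\mathbb{N}'),
\end{align*}
the middle isomorphism being the adjunction $\mathbf{d}_!\dashv\mathbf{d}^{*}$ applied in each degree (natural in $\mathbb{Q}_n$ and commuting with the induced differentials) together with $\mathbf{d}^{*}\mathbb{N}=\mathbb{N}'$. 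It then remains to check that this composite coincides with the homomorphism induced by the natural transformation attached to $\mathbf{d}^{*}$, and that it is natural in $\mathbb{M}$ and $\mathbb{N}$; both follow from the standard comparison-of-resolutions argument once one notes that $\mathbf{d}_!$, the counit $\mathbf{d}_!\mathbf{d}^{*}\to\mathrm{Id}$, and the adjunction isomorphism are all functorial.

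The main obstacle is the claim that $\mathbf{d}_!$ sends the GSB complex $\mathbb{Q}_{\bullet}\to\mathbb{M}'$ — whose differentials are $k$-split at each object, indeed which carries an objectwise $k$-linear contracting homotopy — to an allowable exact complex. This is not formal: $\mathbf{d}_!$ is only a left adjoint, assembled objectwise from colimits over the comma categories $\mathbf{i}/\mathbf{d}$ and relative tensor products $\mathbb{A}^{i}\otimes_{v}(-)$, so it is a priori merely right exact, and the $k$-linear contracting homotopy of a GSB resolution is not $\mathbb{A}'$-linear, hence need not survive those relative tensor products. Handling it should require the combinatorics of the subdivision: analyzing the categories $\mathbf{i}/\mathbf{d}$ so that, on the modules occurring in a GSB resolution of a \emph{subdivided} diagram, the defining colimits of $\mathbf{d}_!$ reduce to well-behaved (directed, or split) colimits, and/or using the explicit bar form of the GSB terms and differentials over $\mathbb{A}'$ to exhibit degreewise a $k$-linear contraction of $\mathbf{d}_!\mathbb{Q}_{\bullet}\to\mathbb{M}$. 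One could instead run the argument through $\mathbf{d}^{*}$ — apply the exact, full, faithful, allowability-preserving functor $\mathbf{d}^{*}$ to a GSB resolution $\mathbb{P}_{\bullet}\to\mathbb{M}$, obtaining an allowable resolution $\mathbf{d}^{*}\mathbb{P}_{\bullet}\to\mathbb{M}'$, and then use full-faithfulness to identify $\mathrm{Hom}_{\mathbb{A}'}(\mathbf{d}^{*}\mathbb{P}_{\bullet},\mathbb{N}')$ with $\mathrm{Hom}_{\mathbb{A}}(\mathbb{P}_{\bullet},\mathbb{N})$ — but then the obstacle migrates to showing each $\mathbf{d}^{*}\mathbb{P}_n$ is relative projective over $\mathbb{A}'$ (for instance via the right adjoint of $\mathbf{d}^{*}$, if it preserves allowable epimorphisms), which is again a statement about how the GSB building blocks behave under subdivision rather than a consequence of the properties recorded so far.
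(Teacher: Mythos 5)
The crux you flag is a genuine gap, and as you have set it up it is aimed at the wrong resolution. The paper explicitly cautions (end of Section 2) that one might hope every relative projective allowable resolution $\mathbb{P}_\bullet\to\mathbb{N}$ of an $\mathbb{A}'$-module stays a resolution after applying $\mathbf{d}_!$, but that this fails in general --- when $k$ is a field it would force $\mathbf{d}_!$ to be exact. So the claim that $\mathbf{d}_!$ of the GSB resolution of $\mathbb{M}'$ \emph{taken in} $\mathbb{A}'$-$\mathbf{mod}$ is a relative projective allowable resolution of $\mathbb{M}$ is exactly the kind of statement that cannot be had for free, and your sketch of how one might attack it (analyzing the comma categories $\mathbf{i}/\mathbf{d}$, or contracting the complex by hand) is not carried out. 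The missing ingredient is the Gerstenhaber--Schack result recorded as Theorem 2.3 of the paper: for a \emph{subdivided} module $\mathbb{M}'$ there is a particular relative projective allowable resolution $\mathbb{P}_\bullet\to\mathbb{M}'$, obtained by ``spreading out'' the GSB resolution of $\mathbb{M}$ over $\mathcal{C}'$ --- it is not the GSB resolution of $\mathbb{M}'$ --- for which $\mathbf{d}_!\mathbb{P}_\bullet\to\mathbb{M}$ is precisely the GSB resolution of $\mathbb{M}$. If you replace your $\mathbb{Q}_\bullet$ by that resolution, the difficulty disappears by fiat and your adjunction computation $\mathrm{Hom}_{\mathbb{A}}(\mathbf{d}_!\mathbb{P}_\bullet,\mathbb{N})\cong\mathrm{Hom}_{\mathbb{A}'}(\mathbb{P}_\bullet,\mathbb{N}')$ goes through degreewise; the remaining compatibility with the map induced by $\mathbf{d}^*$ is the standard comparison argument you describe. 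Without citing or reproving that theorem, the proposal does not close.

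It is also worth noting that, once repaired, your argument is a genuinely different (and more classical) route than the paper's. The paper never compares resolutions directly at the level of $\mathrm{Hom}$ complexes: it builds the relative derived categories $\mathcal{D}^-_k(\mathbb{A}\text{-}\mathbf{mod})$ and $\mathcal{D}^-_k(\mathbb{A}'\text{-}\mathbf{mod})$, identifies $\mathbf{Ext}^i_{\mathbb{A},k}(\mathbb{M},\mathbb{N})$ with $\mathrm{Mor}_{\mathcal{D}^-_k}(\mathbb{M}_\bullet,\mathbb{N}_\bullet[i])$ (Theorem 3.6), proves that $\mathbf{d}^*$ is full and faithful on these derived categories by a roof-manipulation argument (Theorem 3.9), and then reads off the Invariance Theorem as a corollary. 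Both routes ultimately rest on the same Theorem 2.3; the derived-category route costs more machinery but yields the stronger full-faithfulness statement for arbitrary complexes, which is what the paper needs to assemble the General CCT.
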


    Our effort in the next section is to generalize this theorem. We show that in a certain derived category context, where we may view the relative Yoneda cohomology as homomorphism groups, the extension of $\mathbf{d}^*$ is full and faithful. This result combined with our work in [5] and [6], on M. Gerstenhaber and S. D. Schack's $Special$ $Cohomology$ $Comparison$ $Theorem$,  constitutes a generalization of their $General$ $Cohomology$ $Comparison$ $Theorem$.

  A very important ingredient in our work is the following  theorem of M. Gerstenhaber and S. D. Schack.

 \begin{theorem} Let $\mathbb{N}$ be an $\mathbb{A}$-module. There is a relative projective allowable resolution $\mathbb{N}_\bullet\rightarrow\mathbb{N}'$ of the subdivided module $\mathbb{N}'$ in $\mathbb{A}'$-$\mathbf{mod}$ such that $\mathbf{d}_!\mathbb{P}_\bullet\rightarrow\mathbb{N}$ is a relative projective allowable resolution of $\mathbb{N}$ in $\mathbb{A}$-$\mathbf{mod}$.\end{theorem}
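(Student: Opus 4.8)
The plan is to take $\mathbb{P}_\bullet\rightarrow\mathbb{N}'$ to be the generalized simplicial bar resolution of $\mathbb{N}'$ in $\mathbb{A}'$-$\mathbf{mod}$ and to show that $\mathbf{d}_!$ carries it, term by term and compatibly with differentials, onto the GSB resolution of $\mathbb{N}$ in $\mathbb{A}$-$\mathbf{mod}$. Since the GSB construction is functorial and always yields relative projective allowable resolutions, this identification immediately gives the theorem: $\mathbf{d}_!\mathbb{P}_\bullet$ is then literally the GSB resolution of $\mathbf{d}_!\mathbb{N}'$, and $\mathbf{d}_!\mathbb{N}'\cong\mathbb{N}$ because $\mathbf{d}^*$ is full and faithful.

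First I would record what comes for free. Being a left adjoint, $\mathbf{d}_!$ is right exact and commutes with all colimits, in particular with the direct sums that assemble each GSB term; moreover $\mathbf{d}_!$ preserves relative projectives, so each $\mathbf{d}_!\mathbb{P}_n$ is already relative projective in $\mathbb{A}$-$\mathbf{mod}$. What does \emph{not} come for free is exactness of $\mathbf{d}_!$ in positive degrees, and this is precisely why a soft argument is unavailable and the explicit comparison with the $\mathbb{A}$-side GSB resolution is forced.

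Then I would compute $\mathbf{d}_!\mathbb{P}_n$ directly. The degree-$n$ GSB term of $\mathbb{N}'$ over $\mathbb{A}'$ is, at each object $\tau\in\mathcal{C}'$, a direct sum indexed by chains of composable $\mathcal{C}'$-maps issuing from $\tau$, of extended modules built out of the bar complexes of the algebras $(\mathbb{A}')^{\bullet}=\mathbb{A}^{\mathbf{d}(\bullet)}$ and of $(\mathbb{N}')^{\bullet}=\mathbb{N}^{\mathbf{d}(\bullet)}$. Substituting this into the colimit formula $(\mathbf{d}_!\mathbb{X})^{i}=\mathrm{colim}_{(w,\sigma)\in\mathbf{i/d}}\,\mathbb{A}^{i}\otimes_{w}\mathbb{X}^{\sigma}$, one finds that the colimit ``integrates out'' the simplex part of the indexing data: a $\mathcal{C}'$-map $[\tau,\sigma,v]$ is determined, up to the identifications imposed by the comma category $\mathbf{i/d}$, by the $\mathcal{C}$-map $v=\mathbf{d}[\tau,\sigma,v]$, while $(\mathbb{A}')^{\tau}$ and $(\mathbb{N}')^{\tau}$ depend on $\tau$ only through $\mathbf{d}\tau$. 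Pushing this bookkeeping through produces a natural isomorphism between $\mathbf{d}_!\mathbb{P}_n$ and the degree-$n$ term $\mathbb{Q}_n$ of the GSB resolution of $\mathbb{N}$ over $\mathbb{A}$; the bar differentials match, and the $\mathcal{C}'$-twists match the $\mathcal{C}$-twists because $\mathbf{d}$ sends $[\tau,\sigma,u]\circ[\sigma,\omega,v]$ to $uv$, while the augmentation $\mathbf{d}_!\mathbb{P}_0\rightarrow\mathbf{d}_!\mathbb{N}'$ becomes $\mathbb{Q}_0\rightarrow\mathbb{N}$. Hence $\mathbf{d}_!\mathbb{P}_\bullet\cong\mathbb{Q}_\bullet$ as augmented complexes, and $\mathbb{Q}_\bullet\rightarrow\mathbb{N}$ is a relative projective allowable resolution.

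The main obstacle is precisely this last step: carefully unwinding the colimit over $\mathbf{i/d}$ against the combinatorics of simplices of $\mathcal{C}$ so that the simplex data collapses with nothing lost (the comma category must have the right cofinality for this) and checking that the two differentials correspond. An equivalent and perhaps cleaner route is to package the GSB resolution as the bar resolution of the comonad $G=FU$ attached to the relative (``allowable'') structure on $\mathbb{A}$-$\mathbf{mod}$, where $U$ forgets down to $k$-modules indexed by the objects; the whole argument then reduces to the compatibilities $U\mathbf{d}_!\cong\mathbf{d}_!^{\mathcal{S}}U'$ and $\mathbf{d}_!F'\cong F\mathbf{d}_!^{\mathcal{S}}$, i.e. to the statement that $\mathbf{d}_!$ of a free relative projective over $\mathbb{A}'$ is a free relative projective over $\mathbb{A}$. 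These give $\mathbf{d}_!G'\cong G\mathbf{d}_!$, so $\mathbf{d}_!$ of the $G'$-bar resolution of $\mathbb{N}'$ is the $G$-bar resolution of $\mathbf{d}_!\mathbb{N}'\cong\mathbb{N}$, still split over $U$ and hence still an allowable resolution. Either way the real weight of the proof lies in controlling the effect of $\mathbf{d}_!$ on the free building blocks of the GSB resolution.
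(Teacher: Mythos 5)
Your proof rests on the identification $\mathbf{d}_!\mathcal{B}_\bullet(\mathbb{N}')\cong\mathcal{B}_\bullet(\mathbb{N})$, where $\mathcal{B}$ denotes the GSB resolution, and that identification is false; the resolution the theorem refers to (following Gerstenhaber--Schack, as the paper indicates immediately after the statement) is \emph{not} the GSB resolution of $\mathbb{N}'$ but a different complex obtained by ``spreading out'' the GSB resolution of $\mathbb{N}$ over $\mathcal{C}'$. To see the failure concretely: by adjunction, $\mathbf{d}_!$ carries the free (relative projective) $\mathbb{A}'$-module generated at a simplex $\sigma$ by a $k$-module $X$ to the free $\mathbb{A}$-module generated at $\mathbf{d}\sigma=\sigma(0)$ by $X$, since $\mathrm{Hom}_{\mathbb{A}}(\mathbf{d}_!F'_\sigma X,\mathbb{M})\cong\mathrm{Hom}_k(X,\mathbb{M}^{\mathbf{d}\sigma})$. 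Hence already in degree $0$ the complex $\mathbf{d}_!\mathcal{B}_0(\mathbb{N}')$ is a direct sum of free modules indexed by \emph{all simplices} of $\mathcal{C}$, whereas $\mathcal{B}_0(\mathbb{N})$ has one summand per object of $\mathcal{C}$: the colimit over $\mathbf{i/d}$ does not ``integrate out'' the simplex data as you claim (the comma category has no such cofinality property --- distinct simplices with the same initial vertex contribute distinct summands). The comonad version of your argument breaks at exactly the same spot: $U\mathbf{d}_!\not\cong\mathbf{d}_!^{\mathcal{S}}U'$, because the left side evaluated at $i$ on $\mathbb{N}'$ gives $\mathbb{N}^i$ (as $\mathbf{d}_!\mathbf{d}^*\cong\mathrm{id}$), while the right side gives the coproduct $\bigoplus_{\sigma(0)=i}\mathbb{N}^i$ over the fiber of $\mathbf{d}$; so $\mathbf{d}_!G'\not\cong G\mathbf{d}_!$.

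What survives of your argument is the correct observation that each $\mathbf{d}_!\mathcal{B}_n(\mathbb{N}')$ is relative projective. But exactness and allowability of $\mathbf{d}_!\mathcal{B}_\bullet(\mathbb{N}')\rightarrow\mathbb{N}$ is precisely the kind of claim the paper warns cannot be had for free ($\mathbf{d}_!$ is not exact), and once the identification with $\mathcal{B}_\bullet(\mathbb{N})$ is gone you offer no argument for it. The intended construction runs in the opposite direction: start from the GSB resolution $\mathcal{B}_\bullet(\mathbb{N})\rightarrow\mathbb{N}$ in $\mathbb{A}$-$\mathbf{mod}$, whose degree-$n$ term is a sum of free modules indexed by chains of maps in $\mathcal{C}$, i.e.\ by simplices; redistribute those summands over the objects of $\mathcal{C}'$ (which \emph{are} the simplices of $\mathcal{C}$) to produce an $\mathbb{A}'$-complex $\mathbb{P}_\bullet\rightarrow\mathbb{N}'$; check that this spread-out complex is a relative projective allowable resolution of $\mathbb{N}'$; and only then observe that $\mathbf{d}_!$ collapses it back onto $\mathcal{B}_\bullet(\mathbb{N})$. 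Designing $\mathbb{P}_\bullet$ so that $\mathbf{d}_!$ recovers exactly the GSB resolution is the content your proposal is missing.
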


 They obtained the resolution $\mathbb{P}_\bullet\rightarrow\mathbb{N}'$  by ``spreading out" the $\mathbf{GSB}$ resolution of $\mathbb{N}$ over $\mathcal{C}'$. The resolution $\mathbf{d}_!\mathbb{P}_\bullet\rightarrow\mathbb{N}$ is exactly the $\mathbf{GSB}$ resolution. One might think that every $\mathbb{A}'$-module $\mathbb{N}$ has a relative projective allowable resolution $\mathbb{P}_\bullet\rightarrow\mathbb{N}$ for which $\mathbf{d}_!\mathbb{P}_\bullet\rightarrow\mathbf{d}_!\mathbb{N}$ is also a relative projective allowable resolution, but in general this is not true. When $k$ is a field this is equivalent to the exactness of $\mathbf{d}_!$, so it can't hold in general.

 \section{The Invariance Theorem}
 We construct now the relative derived category $\mathcal{D}^-_k(\mathbb{A}-\mathbf{mod})$ in which the Yoneda cohomology of $\mathbb{A}$-$\mathbf{mod}$ can be identified with the homomorphism groups. This construction is inspired by the ideas of [5] and [6],   where we defined the relative derived category of $\mathbb{A}$-$\mathbf{bimod}$. The word ``$\mathbf{relative}$"  is a
reminder to the reader that Yoneda cohomology is a relative theory,  since $k$ is a commutative ring that is not
necessarily a field.

 Let $\mathbb{A}$ be a diagram over $\mathcal{C}$ and let $Kom^{-}(\mathbb{A}-\mathbf{mod})$ be the category of bounded to
the right complexes of $\mathbb{A}$-modules
$$\mathbb{M_\bullet}:=\xymatrix{\cdots
\mathbb{M}_n\ar[r]&\cdots&
\cdots\ar[r]&\mathbb{M}_1\ar[r]& \mathbb{M}_0\ar[r]&0}$$

A map between two complexes $\mathbb{M_\bullet}$ and
$\mathbb{N_\bullet}$ is a collection of maps
$f=(f_i):\mathbb{M}_i\rightarrow\mathbb{N}_i$, one for each positive
integer $i$, which commute with the differentials of
$\mathbb{M_\bullet}$ and $\mathbb{N_\bullet}$. We do not require the
maps defining the complexes or the maps between complexes to be
$k$-split. We denote the homotopic category of
$Kom^{-}(\mathbb{A}-\mathbf{mod})$ by $\mathcal{K}^{-}(\mathbb{A}-\mathbf{mod})$.

\begin{definition}
A map
$\xymatrix{\mathbb{M_{\bullet}}\ar[r]^f&\mathbb{N}_{\bullet}}$ in
$Kom^{-}(\mathbb{A}-\mathbf{mod})$ is a $\mathbf{relative}$
$\mathbf{quasi}$-$\mathbf{isomorphism}$ if the maps of complexes of $\mathbb{A}^i$-modules
$\xymatrix{\mathbb{M}_{\bullet}^i\ar[r]^{f^i}&\mathbb{N}_{\bullet}^i}$
have  contractible cones, when considered as complexes of
$k$-modules, for all $i\in\mathcal{C}$.
\end{definition}

The following proposition characterizes relative quasi-isomorphisms and is an ingredient in proving that the class of these maps is localizing in $\mathcal{K}^{-}(\mathbb{A}-\mathbf{mod})$.
It was proved in [5] and [6] for $\mathbb{A}$-bimodules and it may be extended to $\mathbb{A}$-modules.
\begin{proposition}
Let $A$ be any $k$-algebra and $f:M_{\bullet}\longrightarrow
N_{\bullet}$ a map of complexes of $A$-modules in
$Kom^{-}(A-\mathbf{mod})$. Then, $f$ is a relative quasi-isomorphism if and
only if there exists $\gamma:N_{\bullet}\longrightarrow M_{\bullet}$
a map of complexes of $k$-modules such that $f\gamma\sim
id_{N_{\bullet}}$  and $\gamma f\sim id_{M_{\bullet}}$ in
$Kom^{-}(k-\mathbf{mod})$, where `$\sim$' stands for homotopy equivalence.
\end{proposition}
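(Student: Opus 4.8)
The plan is to reduce the statement to the classical fact that the mapping cone of a chain map is contractible precisely when that map is a homotopy equivalence, and to prove this fact by a direct computation with the cone rather than by citation, so that the homotopy inverse $\gamma$ and its homotopies are produced explicitly --- they are what is needed in the localization argument that follows this proposition. Since here the indexing category has a single object, the hypothesis that $f$ be a relative quasi-isomorphism says exactly that the cone $C(f)$, regarded as a complex of $k$-modules, is contractible, i.e. $\mathrm{id}_{C(f)}$ is null-homotopic over $k$; so what must be shown is that this is equivalent to $f$ being a homotopy equivalence in $Kom^{-}(k-\mathbf{mod})$. I would fix the usual model $C(f)_n=M_{n-1}\oplus N_n$ with differential $d(m,n)=(-d_M m,\,-f m+d_N n)$ and note that $C(f)$ again lies in $Kom^{-}$, so that no subtlety about boundedness arises anywhere below.

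For the ``only if'' direction I would start from a $k$-linear contraction $s$ of $C(f)$, written in $2\times 2$ block form with components $s_{11}\colon M_{n-1}\to M_n$, $s_{12}\colon N_n\to M_n$, $s_{21}\colon M_{n-1}\to N_{n+1}$ and $s_{22}\colon N_n\to N_{n+1}$, and expand $ds+sd=\mathrm{id}_{C(f)}$ into its four matrix entries. The $(1,2)$ entry yields $d_M s_{12}=s_{12}d_N$, so that $\gamma:=-s_{12}$ is a chain map $N_{\bullet}\to M_{\bullet}$ of $k$-modules; the $(1,1)$ entry exhibits $s_{11}$ as a $k$-homotopy $\gamma f\sim\mathrm{id}_{M_{\bullet}}$; and the $(2,2)$ entry exhibits $s_{22}$ as a $k$-homotopy $f\gamma\sim\mathrm{id}_{N_{\bullet}}$. (The $(2,1)$ entry is not needed here.) This half is pure bookkeeping, the only care being with the signs dictated by the cone convention.

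For the converse, suppose $\gamma$ together with $k$-homotopies $f\gamma\sim\mathrm{id}_{N_{\bullet}}$ and $\gamma f\sim\mathrm{id}_{M_{\bullet}}$ are given. The quickest route is to note that $f$ is then an isomorphism in the triangulated homotopy category $\mathcal{K}^{-}(k-\mathbf{mod})$, so that applying $\mathrm{Hom}_{\mathcal{K}}(X,-)$ to the triangle $M_{\bullet}\xrightarrow{f}N_{\bullet}\to C(f)\to M_{\bullet}[1]$ forces $\mathrm{Hom}_{\mathcal{K}}(X,C(f))=0$ for every $X$, whence $C(f)\cong 0$ in $\mathcal{K}^{-}(k-\mathbf{mod})$, i.e. $C(f)$ is contractible. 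A more hands-on alternative, parallel to the first half, is to reassemble a contraction of $C(f)$ in block form from $\gamma$ and the two homotopies: the $(1,2)$, $(1,1)$ and $(2,2)$ entries of $ds+sd=\mathrm{id}$ close up at once, while the $(2,1)$ entry imposes one additional identity on the $s_{21}$-block which can be satisfied after replacing the given homotopies by homotopic ones meeting the standard compatibility. I expect this last point --- producing the $(2,1)$-block, equivalently rigidifying the homotopy data --- to be the only step that needs genuine attention; both versions are insensitive to any boundedness hypothesis, so the argument for $Kom^{-}$ is the same as for unbounded complexes.
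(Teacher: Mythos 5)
Your ``only if'' direction is exactly the paper's argument: write a $k$-linear contraction of $C(f)$ in $2\times 2$ block form, expand $sd+ds=\mathrm{id}$, and read off from the four entries that the off-diagonal block is a chain map $\gamma$ and that the two diagonal blocks are the homotopies $\gamma f\sim \mathrm{id}_{M_{\bullet}}$ and $f\gamma\sim \mathrm{id}_{N_{\bullet}}$ (the discrepancy $\gamma=-s_{12}$ versus the paper's $\gamma=s_{12}$ is only your choice of sign in the cone differential). For the converse you and the paper part ways: the paper never invokes the triangulated structure of $\mathcal{K}^{-}(k-\mathbf{mod})$, but instead writes down the explicit contraction
$$s^{\mathcal{C}(f)}_{\bullet}=\left(\begin{array}{cc} s^{M_{\bullet}}+\gamma(s^{N_{\bullet}}f-fs^{M_{\bullet}}) & \gamma \\ s^{N_{\bullet}}(fs^{M_{\bullet}}-s^{N_{\bullet}}f) & -s^{N_{\bullet}} \end{array}\right)$$
and verifies $s^{\mathcal{C}(f)}_{\bullet}d+ds^{\mathcal{C}(f)}_{\bullet}=\mathrm{id}$ by direct computation --- these are precisely the correction terms in the $(1,1)$ and $(2,1)$ blocks that your ``hands-on alternative'' gestures at but does not produce. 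Your primary route for the converse (a map that becomes an isomorphism in $\mathcal{K}^{-}(k-\mathbf{mod})$ has vanishing $\mathrm{Hom}_{\mathcal{K}}(X,C(f))$ for all $X$, so taking $X=C(f)$ kills $\mathrm{id}_{C(f)}$ up to homotopy) is correct and complete, and is shorter; what the paper's computation buys is a wholly elementary, self-contained proof together with an explicit contracting homotopy, at the cost of a block-matrix verification. Either way the equivalence holds, and your observation that boundedness plays no role is accurate.
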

\begin{proof}
$'\Rightarrow'$

Let $f$ be a relative quasi-isomorphism. Since
$\mathcal{C}{(f)}_{\bullet }$ is contractible, when regarded as a
complex of $k$-modules,  there exist
$s=(s_n):\mathcal{C}{(f)}_{\bullet}^{n-1}\longrightarrow\mathcal{C}{(f)}_{\bullet}^{n}$
maps of $k$-modules such that
$sd_{\mathcal{C}{(f)}_{\bullet}}+d_{\mathcal{C}{(f)}_{\bullet}}s=id$.
We may assume that
\begin{center} $s=\left(%
\begin{array}{cc}
  \alpha & \gamma\\
  \beta& \delta \\
\end{array}%
\right)\mathrm{and}$ $d_{\mathcal{C}{(f)}_{\bullet}}=\left(%
\begin{array}{cc}
  -d_{M_\bullet} & 0\\
  f & d_{N_\bullet} \\
\end{array}%
\right),$\end{center} where $\alpha:M_{\bullet -1}\longrightarrow M_{\bullet}$,
$\beta:M_{\bullet -1}\longrightarrow N_{\bullet +1}$,
$\gamma:N_{\bullet}\longrightarrow M _{\bullet}$ and
$\delta:N_{\bullet}\longrightarrow N_{\bullet +1}$ are $k$ linear
maps. Since
$sd_{\mathcal{C}{(f)}_{\bullet}}+d_{\mathcal{C}{(f)}_{\bullet}}s=id$,
we obtain  $-\alpha d_{M_{\bullet}}+\gamma
f-d_{M_{\bullet}}\alpha=id_{M_{\bullet}}$, $-\beta
d_{M_{\bullet}}+\delta f+f\alpha+d_{N_{\bullet}}\beta=0$, $\delta
d_{N_\bullet}+f\gamma+d_{N_\bullet}\delta=id_{N_\bullet}$, and $\gamma
d_{N_\bullet}-d_{M_\bullet}\gamma=0 $.

The last relation implies that $\gamma$ is a map of complexes of $k$-modules. Since
$\delta
d_{N_{\bullet}}+d_{N_{\bullet}}\delta=id_{N_{\bullet}}-f\gamma$ and
$\alpha d_{M_{\bullet}}+d_{M_{\bullet}}\alpha=\gamma
f-id_{M_{\bullet}}$, we have that $f\gamma \sim id_{N_{\bullet}}$ and
$\gamma f\sim id_{M_{\bullet}}$ in $Kom^{-}{(k-\mathbf{mod})}$.

$'\Leftarrow'$

Let $f\gamma \sim id_{N_{\bullet}}$ and $\gamma f\sim
id_{M_{\bullet}}$ in $Kom^{-}{(k-\mathbf{mod})}$. This means that there are maps
$s^{N_{\bullet}}$ and $s^{M_{\bullet}}$ such that $f\gamma -
id_{N_{\bullet}}=s^{N_{\bullet}}d_{N_{\bullet}}+d_{N_{\bullet}}s^{N_{\bullet}}$
and $\gamma f-
id_{M_{\bullet}}=s^{M_{\bullet}}d_{M_{\bullet}}+d_{M_{\bullet}}s^{M_{\bullet}}$.

The map $s^{\mathcal{C}{(f)}}_{\bullet}=\left(%
\begin{array}{cc}
  s^{M_{\bullet}}+\gamma(s^{N_{\bullet}}f-fs^{M_{\bullet}}) & \gamma \\
  s^{N_{\bullet}}(fs^{M_{\bullet}}-s^{N_{\bullet}}f) & -s^{N_{\bullet}} \\
\end{array}%
\right) $ is a homotopy. \begin{center}
$s^{\mathcal{C}{(f)}}_{\bullet}d_{\mathcal{C}{(f)}_{\bullet}}+
d_{\mathcal{C}{(f)}_{\bullet}}s^{\mathcal{C}{(f)}}_{\bullet}=$
\end{center} \begin{center} $\left(%
\begin{array}{cc}
  id_{M_{\bullet}}-\gamma s^{N_{\bullet}}fd_{M_{\bullet}}+\gamma fs^{M_{\bullet}}d_{M_{\bullet}}-d_{M_{\bullet}}\gamma s^{N_{\bullet}}f+d_{M_{\bullet}}\gamma fs^{M_{\bullet}}& 0 \\
  s^{N_{\bullet}}s^{N_{\bullet}}fd_{M_{\bullet}}+f\gamma s^{N_{\bullet}}f- d_{N_{\bullet}}s^{N_{\bullet}}s^{N_{\bullet}}f-s^{N_{\bullet}}f\gamma f& id_{N_{\bullet}} \\
\end{array}%
\right)$=\end{center}

\begin{center}$\left(%
\begin{array}{cc}
   id_{M_{\bullet}} & 0 \\
  0 &  id_{N_{\bullet}} \\
\end{array}%
\right)= id_{\mathcal{C}{(f)}_{\bullet}}.$\end{center} This implies that
$\mathcal{C}{(f)}_{\bullet}$ is contractible in $Kom^{-}(k-\mathbf{mod}).$
\end{proof}

As a  corollary we note that if any two of $f, g$ or
$fg$ are relative quasi-isomorphisms then so is the third.
\begin{proposition} The class of relative quasi-isomorphisms in the homotopic
category $\mathcal{K}^{-}(\mathbb{A}-\mathbf{mod})$ is localizing.
\end{proposition}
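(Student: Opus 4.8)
The plan is to verify the axioms for a localizing class (a left or right multiplicative system, in the sense of Gabriel--Zisman / Verdier) for the class $S$ of relative quasi-isomorphisms inside $\mathcal{K}^{-}(\mathbb{A}\text{-}\mathbf{mod})$, together with compatibility with the triangulated structure. First I would record the easy axioms: $S$ contains all identities (the cone on an identity is contractible even as a complex of $\mathbb{A}^i$-modules, hence certainly as a complex of $k$-modules), and $S$ is closed under composition --- this is exactly the corollary to Proposition 3.3 noted just above, obtained by applying the characterization objectwise at each $i\in\mathcal{C}$ and composing the $k$-linear homotopy inverses $\gamma$. The ``two-out-of-three'' consequence of that same corollary will be used repeatedly.

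Next I would establish the Ore-type (calculus of fractions) conditions. Given $s\colon \mathbb{M}_\bullet\to\mathbb{N}_\bullet$ in $S$ and an arbitrary map $f\colon \mathbb{M}_\bullet\to\mathbb{M}'_\bullet$, I must produce a commutative square in $\mathcal{K}^{-}$ completing $f$ and $s$ to $s'\colon\mathbb{M}'_\bullet\to\mathbb{N}'_\bullet$ in $S$ with $s' f \sim g s$ for some $g$. The standard device is to form the mapping cone $\mathbb{N}'_\bullet := \mathcal{C}(\,(f,\,-s)^{t}\colon \mathbb{M}_\bullet\to\mathbb{M}'_\bullet\oplus\mathbb{N}_\bullet\,)$, or equivalently to use the homotopy pushout of $s$ along $f$; the natural map $\mathbb{M}'_\bullet\to\mathbb{N}'_\bullet$ is then the desired $s'$. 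Its membership in $S$ is checked by applying the functor $(-)^i$ for each $i$: since $(-)^i$ is exact and commutes with mapping cones, $(\mathbb{N}'_\bullet)^i$ is the corresponding homotopy pushout of $s^i$ along $f^i$, and because $s^i$ becomes an isomorphism in the homotopy category of $k$-complexes (Proposition 3.3), so does $(s')^i$ --- a homotopy pushout of a $k$-split quasi-isomorphism along any map is again one, which one verifies directly from the cone description using the homotopy inverse $\gamma^i$ supplied by Proposition 3.3. The dual (co-Ore) condition is handled symmetrically with mapping cocones / homotopy pullbacks, or one simply invokes the self-duality of the argument.

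Then comes the cancellation axiom: if $s f \sim s g$ with $s\in S$, there is $t\in S$ with $f t \sim g t$ (and dually). Here I would set $h := f - g$ and use that $s h \sim 0$; passing to each object $i$, $s^i h^i \sim 0$ with $s^i$ a $k$-homotopy equivalence forces $h^i \sim 0$ after precomposition with the map from the homotopy kernel (mapping cocone) of $h$, and one packages these into an actual morphism $t$ in $\mathcal{K}^{-}(\mathbb{A}\text{-}\mathbf{mod})$ landing in $S$, again by the exactness and cone-compatibility of $(-)^i$ plus Proposition 3.3. Finally, compatibility with the triangulation: $S$ is stable under the shift functor (the cone of $f[1]$ is the shift of the cone of $f$, so contractibility is preserved objectwise), and given a morphism of triangles whose outer two vertical maps are in $S$, the third is forced to be in $S$ by a five-lemma-style argument on cones carried out in $\mathcal{K}^{-}(k\text{-}\mathbf{mod})$ objectwise --- precisely the form in which Proposition 3.3 is most convenient. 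I expect the genuine work, and the only place demanding care, to be the Ore condition: one must check not merely that the completing square exists in the underlying category of complexes but that the produced $s'$ is a relative quasi-isomorphism, and that requires threading the non-natural splittings/homotopies of Proposition 3.3 through the cone construction at every $i$ simultaneously; everything else reduces, via the objectwise principle and Proposition 3.3, to the classical fact that $k$-homotopy equivalences form a localizing class in $\mathcal{K}^{-}(k\text{-}\mathbf{mod})$.
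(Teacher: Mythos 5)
Your proposal is correct and follows essentially the same route as the paper: both run the classical Gelfand--Manin cone/homotopy-pushout argument for localizing quasi-isomorphisms and then observe that each new map produced has cone equal (up to shift) to the cone of the given relative quasi-isomorphism, hence objectwise $k$-contractible by Proposition 3.3. The one imprecision is in the cancellation step, where the auxiliary morphism $t$ should be the map out of the cocone of the factorization of $h=f-g$ through $\mathcal{C}(s)_\bullet[-1]$ (so that $\mathcal{C}(t)_\bullet\cong\mathcal{C}(s)_\bullet[-1]$), not of the cocone of $h$ itself; with that reading your sketch matches the paper's proof.
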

\begin{proof}
By the previous proposition it remains to justify the extension conditions and the left-right equivalence condition.

That is, for every $ f\in
Mor_{\mathcal{K}^{-}(\mathbb{A}-\mathbf{mod})}$ and $s$ relative
quasi-isomorphism there exist
$g\in$$Mor_{\mathcal{K}^{-}(\mathbb{A}-\mathbf{mod})}$ and $t$
relative quasi-isomorphism such that the following squares

\begin{center}$\xymatrix{\mathbb{N}_{\bullet}\ar[r]^f
\ar[d]_{t}&\mathbb{M}_{\bullet}\ar[d]^
{s}\\
\mathbb{K}_{\bullet}\ar[r]^{g}& \mathbb{L}_{\bullet}}$ resp.
$\xymatrix{\mathbb{L}_{\bullet}\ar[r]^{g}\ar[d]_{s}&\mathbb{K}_{\bullet}\ar[d]^{t}\\
\mathbb{M}_{\bullet}\ar[r]^{f}& \mathbb{N}_{\bullet}}$\end{center}
are commutative (extension). In addition, given  $f,g$ two morphisms from $\mathbb{N}_\bullet$ to
$\mathbb{M}_\bullet$, the existence of a relative
quasi-isomorphism $s$ such that $sf=sg$ is equivalent to the existence of a
relative quasi-isomorphism $t$ such that $ft=gt$ (left-right equivalence).

In [1], chapter 3, theorem 4  states that the
class of quasi-isomorphisms (not relative) in the homotopic category
of an abelian category is localizing. The proof of  this theorem can be used entirely so we
will not reproduce it here.  To see that the extension requirement is true one should note that the cone
of the map $t$ constructed in [1] is the same, in
$\mathcal{K}^{-}(\mathbb{A}-\mathrm{mod})$, as the cone of $s$ and hence it is contractible. For the left-right equivalence one needs to note that the cone of the map $t$ constructed is the cone of $s$ shifted by 1, so it is contractible again.

\end{proof}

We define now the relative derived category of $\mathbb{A}$-$\mathbf{mod}$.

\begin{definition}
$\mathcal{D}_{k}^{-}(\mathbb{A}-\mathbf{mod}):=\mathcal{K}^{-}(\mathbb{A}-\mathbf{mod})(\Sigma^{-1}),$
where $\mathcal{K}^{-}$ is the homotopy category and $\Sigma$ is the class of relative quasi-isomorphisms in $\mathcal{K}^{-}(\mathbb{A}-\mathbf{mod})$.
\end{definition}
Because $\Sigma$ is localizing  we may regard the morphisms in $\mathcal{D}_{k}^{-}(\mathbb{A}-\mathbf{mod})$ as equivalence
classes of diagrams $$\xymatrix{&U\ar[dl]_s\ar[dr]^f\\
                    X&&Y}$$
The maps $s$ and $f$ are morphisms in the homotopy category with
$t\in\Sigma$. These diagrams are usually called roofs and we adopt
this terminology. In addition, because $\Sigma$ is a localizing class
the relative derived categories is triangulated.

\begin{proposition}
Let $\mathbb{P}_{\bullet}$ be a complex of relative projective
$\mathbb{A}$-modules and
$\xymatrix{\mathbb{M}_{\bullet}\ar[r]^f&\mathbb{N}_{\bullet}}$ a
relative quasi-isomorphism. Then

a) $Mor_{\mathcal{K}^{-}(\mathbb{A}-\mathbf{mod})}
(\mathbb{P}_{\bullet},\mathcal{C}{(f)}_{\bullet})=0$

b) The canonical map induced by $f$ $$\xymatrix{Mor_{\mathcal{K}^{-}(\mathbb{A}-\mathbf{mod})}(\mathbb{P}_\bullet, \mathbb{M}_\bullet)\ar[r]^{\mathbf{f}} & Mor_{\mathcal{K}^{-}(\mathbb{A}-\mathbf{mod})}(\mathbb{P}_\bullet,\mathbb{N}_\bullet)}$$ is onto.

c) The canonical map
$$\xymatrix{
Mor_{\mathcal{K}^{-}(\mathbb{A}-\mathbf{mod})}(\mathbb{P}_{\bullet},\mathbb{Q}_{\bullet})
\ar[r]^{\mathbf{can}}&Mor_{\mathcal{D}^{-}_{k}(\mathbb{A}\mathbf{-mod})}
(\mathbb{P}_{\bullet},\mathbb{Q}_{\bullet})}$$ is an isomorphism for
every $\mathbb{Q}_{\bullet}\in\ Kom^{-}(\mathbb{A}-\mathbf{mod})$.

\end{proposition}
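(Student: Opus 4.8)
The plan is to prove the three statements in sequence, using (a) to get (b) and then (b) to get (c), since each is a standard consequence of the preceding one once the relative-projectivity hypothesis is exploited in the right place.

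For part (a), I would use the long exact sequence of $\mathrm{Mor}$-groups (in the homotopy category) associated to the distinguished triangle $\mathbb{M}_\bullet\to\mathbb{N}_\bullet\to\mathcal{C}(f)_\bullet\to\mathbb{M}_\bullet[1]$, applied to the functor $\mathrm{Mor}_{\mathcal{K}^-}(\mathbb{P}_\bullet,-)$. The key point is that $f$ being a relative quasi-isomorphism means $\mathcal{C}(f)_\bullet$ is contractible as a complex of $k$-modules objectwise (equivalently, by Proposition 3.3, $\mathcal{C}(f)_\bullet$ has an objectwise $k$-linear contracting homotopy). Since $\mathbb{P}_\bullet$ is a complex of relative projectives and the terms of $\mathcal{C}(f)_\bullet$ fit into allowable short exact sequences (being a mapping cone, the maps into and out of the cone are split over $k$ objectwise), one checks that $\mathrm{Mor}_{\mathcal{K}^-}(\mathbb{P}_\bullet, \mathcal{C}(f)_\bullet)$ vanishes: any chain map $\mathbb{P}_\bullet\to\mathcal{C}(f)_\bullet$ is null-homotopic because one can lift the objectwise $k$-contraction of $\mathcal{C}(f)_\bullet$ to an $\mathbb{A}$-linear homotopy degree by degree, using relative projectivity of each $\mathbb{P}_n$ against the allowable epimorphisms $\mathcal{C}(f)_{n}\to Z_{n-1}$ (or, more directly, the comparison-lemma-style dimension shift). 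This is the step I expect to be the main obstacle, since it requires carefully setting up the inductive lifting of the contracting homotopy through the relative-projective filtration and verifying that the allowability hypotheses are exactly what make each lifting step possible; everything else is formal.

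For part (b), I would again invoke the long exact sequence from the triangle $\mathbb{M}_\bullet\xrightarrow{f}\mathbb{N}_\bullet\to\mathcal{C}(f)_\bullet\to$, now reading off the piece
$$\mathrm{Mor}_{\mathcal{K}^-}(\mathbb{P}_\bullet,\mathbb{M}_\bullet)\xrightarrow{\mathbf{f}}\mathrm{Mor}_{\mathcal{K}^-}(\mathbb{P}_\bullet,\mathbb{N}_\bullet)\to\mathrm{Mor}_{\mathcal{K}^-}(\mathbb{P}_\bullet,\mathcal{C}(f)_\bullet).$$
By part (a) the third term is $0$, so $\mathbf{f}$ is onto. (One could equally note that surjectivity of $\mathbf{f}$ on $\mathrm{Mor}$ is exactly the statement obtained by shifting (a): $\mathrm{Mor}_{\mathcal{K}^-}(\mathbb{P}_\bullet,\mathcal{C}(f)_\bullet[-1])=0$ gives exactness on the left, and $\mathrm{Mor}_{\mathcal{K}^-}(\mathbb{P}_\bullet,\mathcal{C}(f)_\bullet)=0$ gives surjectivity.)

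For part (c), I would use the description of morphisms in $\mathcal{D}^-_k$ as roofs $\mathbb{P}_\bullet\xleftarrow{s}\mathbb{U}_\bullet\xrightarrow{g}\mathbb{Q}_\bullet$ with $s$ a relative quasi-isomorphism. Injectivity of $\mathbf{can}$: if a chain map $\mathbb{P}_\bullet\to\mathbb{Q}_\bullet$ becomes zero in $\mathcal{D}^-_k$, then it is killed after composing with some relative quasi-isomorphism $t:\mathbb{V}_\bullet\to\mathbb{P}_\bullet$; but by part (b) (applied with the roles arranged so that $t$ induces a surjection, hence by the long exact sequence also an injection on $\mathrm{Mor}_{\mathcal{K}^-}(\mathbb{P}_\bullet,-)$ after identifying $\mathrm{Mor}_{\mathcal{K}^-}(\mathbb{P}_\bullet,\mathbb{V}_\bullet)\cong\mathrm{Mor}_{\mathcal{K}^-}(\mathbb{P}_\bullet,\mathbb{P}_\bullet)$), the original map was already null-homotopic. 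Surjectivity of $\mathbf{can}$: given a roof $\mathbb{P}_\bullet\xleftarrow{s}\mathbb{U}_\bullet\xrightarrow{g}\mathbb{Q}_\bullet$ with $s\in\Sigma$, part (b) applied to $s$ produces a chain map $\sigma:\mathbb{P}_\bullet\to\mathbb{U}_\bullet$ with $s\sigma\sim \mathrm{id}_{\mathbb{P}_\bullet}$ in $\mathcal{K}^-$; then $g\sigma:\mathbb{P}_\bullet\to\mathbb{Q}_\bullet$ is an honest chain map representing the same morphism in $\mathcal{D}^-_k$ as the roof (one checks $(g\sigma)\circ \mathrm{id}^{-1}$ and the roof agree by the standard calculus of fractions, using $s\sigma\sim\mathrm{id}$). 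This completes the identification $\mathrm{Mor}_{\mathcal{K}^-}(\mathbb{P}_\bullet,\mathbb{Q}_\bullet)\cong\mathrm{Mor}_{\mathcal{D}^-_k}(\mathbb{P}_\bullet,\mathbb{Q}_\bullet)$.
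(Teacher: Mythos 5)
Your proposal is correct and follows essentially the same route as the paper: part (a) by the inductive degree-by-degree lifting of a null-homotopy through the allowable epimorphisms of the objectwise acyclic, allowable cone (using relative projectivity of each $\mathbb{P}_n$), part (b) by applying $Mor_{\mathcal{K}^-}(\mathbb{P}_\bullet,-)$ to the triangle of $f$ and invoking (a), and part (c) by using (b) to produce a section of the relative quasi-isomorphism in each roof. You have also correctly identified (a) as the only step with real content; the rest is formal, exactly as in the paper.
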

\begin{proof}
a) Since $f$ is a relative quasi-isomorphism the cone
$\mathcal{C}{(f)}_{i}$ is acyclic and allowable $(\forall)
i\in\mathcal{C}$. If $g\in
Mor_{\mathcal{K}^{-}(\mathbb{A}-\mathbf{mod})}
(\mathbb{P}_{\bullet},\mathcal{C}{(f)}_{\bullet})$ then we prove that
$g=(g)_i:\mathbb{P}_i\longrightarrow\mathcal{C}(f)_i, i\geq 0$ is
homotopic to 0 inductively. Since $\mathbb{P}_0$ is a complex of
relative projective $\mathbb{A}$-modules we obtain that the map
$g_0$ from $\mathbb{P}_0$ to $\mathcal{C}{(f)}_0$ can be lifted to a
map $\delta_0:\mathbb{P}_0\longrightarrow \mathcal{C}(f)_1$ such
that $d_{\mathcal{C}(f)_1}\delta_0=g_0$. The image of
$g_1-\delta_0d_{\mathbb{P}_1}$ is contained in the image of
$d_{\mathcal{C}(f)_1}$ so it has a lifting
$\delta_1:\mathbb{P}_1\longrightarrow\mathcal{C}(f)_2$ such that
$d_{\mathcal{C}(f)_2}\delta_1=g_1-\delta_0d_{\mathbb{P}_1}$. Now,
the image of $g_2-\delta_1d_{\mathbb{P}_2}$ is contained in the
image of  $d_{\mathcal{C}(f)_2}$ and the conclusion follows
inductively.

b) The triangle
$\xymatrix{\mathbb{M}_\bullet\ar[r]^f & \mathbb{N}_\bullet\ar[r] & \mathcal{C}(f)_\bullet\ar[r] &\mathbb{M}_\bullet[1]}$ is induced by $f$.
Applying $Mor_{\mathcal{K}^{-}(\mathbb{A}-\mathbf{mod})}(\mathbb{P}_\bullet, (-))$ to it and using part a) we get the that the canonical map $\mathbf{f}$ is onto.

c) We show that $\mathbf{``can"}$ is injective. Assume that the roofs induced by the maps $\xymatrix{\mathbb{P}_\bullet\ar[r]^f & \mathbb{Q}_\bullet}$ and $\xymatrix{\mathbb{P}_\bullet\ar[r]^g & \mathbb{Q}_\bullet}$
are equivalent in $\mathcal{D}^{-}_k(\mathbb{A}-\mathbf{mod})$.
This implies that in  $\mathcal{K}^{-}(\mathbb{A}-\mathbf{mod})$ we have a commutative diagram
 $$\xymatrix{&&\mathbb{X}_{\bullet}\ar[dl]_{a}\ar[dr]^{b}\\
 &\mathbb{P}_{\bullet}\ar[dl]_{id}\ar[drrr]^f&&\mathbb{P}_{\bullet}\ar[dlll]_{id}\ar[dr]^g\\
 \mathbb{P}_{\bullet}&&&&\mathbb{Q}_{\bullet}}$$ with $a$ and $b$ relative quasi-isomorphisms.
Thus $a=b$ and $f a=g b$.

Since $a$ is a relative quasi-isomorphism, part b) implies the existence of a map $l\in Mor_{\mathcal{K}^{-}(\mathbb{A}-\mathbf{mod})}(\mathbb{P_{\bullet}},\mathbb{X}_\bullet)$ such that
$al=id_{\mathbb{P}_\bullet}$. Now the injectivity
follows since $f=fal=gbl=g$.

To show that the map $\mathbf{``can"}$ is surjective we consider an arbitrary roof
$$\xymatrix{&\mathbb{X}_{\bullet}\ar[dr]^f\ar[dl]_{s}\\
\mathbb{P}_{\bullet}&&\mathbb{Q}_{\bullet}}$$ in $Mor_{\mathcal{D}_k^{-}
({\mathbb{A}}-{\mathbf{mod}})}(\mathbb{P}_{\bullet},
\mathbb{Q}_{\bullet})$. Using part b) again there exist a map $t\in Mor_{\mathcal{K}^{-}(\mathbb{A}-\mathbf{mod})}(\mathbb{P}_\bullet, \mathbb{X}_\bullet)$
 such that $st=id_{\mathbb{P}_{\bullet}}$ in $\mathcal{K}^{-}(\mathbb{A}-\mathbf{mod})$. Since $s$ is a
relative quasi-isomorphism then so is $t$, so we have the commutative diagram \begin{center}
$\xymatrix{&&\mathbb{P}_{\bullet}\ar[dl]_t\ar[dr]^{id}\\
                   &\mathbb{X}_{\bullet}\ar[dl]_s\ar[drrr]^f&&\mathbb{P}_{\bullet}
                   \ar[dlll]_{id}\ar[dr]^{ft}\\
                   \mathbb{P}_{\bullet}&&&&\mathbb{Q}_{\bullet}}$\end{center}
Therefore the roofs
$$\xymatrix{&\mathbb{X}_{\bullet}\ar[dl]_s\ar[dr]^f\\
            \mathbb{P}_{\bullet}&&\mathbb{Q}_{\bullet}}\mathrm{and}
            \xymatrix{&\mathbb{P}_{\bullet}\ar[dl]_{id}\ar[dr]^{f
            t}\\
            \mathbb{P}_{\bullet}&&\mathbb{Q}_{\bullet}}$$
are equivalent and since the second is the image of $ft$ through the canonical map it follows that $\mathbf{``can"}$ is surjective.\end{proof}
The proposition helps us establish the connection between the relative Yoneda cohomology of $\mathbb{A}-\mathbf{mod}$ and ${\mathcal{D}^{-}_{k}(\mathbb{A}-\mathbf{mod})}$.
\begin{theorem}
$\mathbf{Ext}^{i}_{\mathbb{A}, \mathbf{k}}(\mathbb{M},\mathbb{N})\simeq
Mor_{\mathcal{D}^{-}_{k}(\mathbb{A}-\mathbf{mod})}
(\mathbb{M}_{\bullet},\mathbb{N}_{\bullet}{[i]}).$
\end{theorem}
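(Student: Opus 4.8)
The plan is to compute the right-hand side by replacing $\mathbb{M}_\bullet$ (the module $\mathbb{M}$ regarded as a complex concentrated in degree $0$, and likewise $\mathbb{N}_\bullet$) by a relative projective allowable resolution, and then to reduce morphisms in $\mathcal{D}^-_k(\mathbb{A}-\mathbf{mod})$ to morphisms in $\mathcal{K}^-(\mathbb{A}-\mathbf{mod})$, where they become cocycles of a Hom-complex.

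First I would fix a relative projective allowable resolution $\varepsilon\colon\mathbb{P}_\bullet\to\mathbb{M}$, which exists by the $\mathbf{GSB}$ construction of Gerstenhaber and Schack; the associated complex lies in $Kom^-(\mathbb{A}-\mathbf{mod})$. I would observe that the augmentation $\mathbb{P}_\bullet\to\mathbb{M}_\bullet$ is a relative quasi-isomorphism: for each $i\in\mathcal{C}$ the augmented complex $\cdots\to\mathbb{P}^i_1\to\mathbb{P}^i_0\to\mathbb{M}^i\to 0$ is exact and $k$-split in every degree by allowability, hence contractible as a complex of $k$-modules, which is exactly the condition in the definition of a relative quasi-isomorphism once $\mathcal{C}(\varepsilon)^i_\bullet$ is identified with the (contractible) cone. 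Consequently $\mathbb{P}_\bullet\to\mathbb{M}_\bullet$ becomes an isomorphism in $\mathcal{D}^-_k(\mathbb{A}-\mathbf{mod})$, so
$$Mor_{\mathcal{D}^-_k(\mathbb{A}-\mathbf{mod})}(\mathbb{M}_\bullet,\mathbb{N}_\bullet[i])\;\cong\;Mor_{\mathcal{D}^-_k(\mathbb{A}-\mathbf{mod})}(\mathbb{P}_\bullet,\mathbb{N}_\bullet[i]).$$

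Next I would invoke part c) of the preceding Proposition with $\mathbb{Q}_\bullet=\mathbb{N}_\bullet[i]$ (which again lies in $Kom^-$): since $\mathbb{P}_\bullet$ is a complex of relative projectives, the canonical map
$$Mor_{\mathcal{K}^-(\mathbb{A}-\mathbf{mod})}(\mathbb{P}_\bullet,\mathbb{N}_\bullet[i])\;\longrightarrow\;Mor_{\mathcal{D}^-_k(\mathbb{A}-\mathbf{mod})}(\mathbb{P}_\bullet,\mathbb{N}_\bullet[i])$$
is an isomorphism. Finally, because $\mathbb{N}_\bullet[i]$ is $\mathbb{N}$ concentrated in the single degree $i$, a chain map $\mathbb{P}_\bullet\to\mathbb{N}_\bullet[i]$ is precisely an $\mathbb{A}$-module map $\mathbb{P}_i\to\mathbb{N}$ that vanishes on the image of $d\colon\mathbb{P}_{i+1}\to\mathbb{P}_i$, i.e.\ an $i$-cocycle of the cochain complex $\mathrm{Hom}_{\mathbb{A}}(\mathbb{P}_\bullet,\mathbb{N})$, and a chain homotopy between two such maps is exactly an $i$-coboundary; hence
$$Mor_{\mathcal{K}^-(\mathbb{A}-\mathbf{mod})}(\mathbb{P}_\bullet,\mathbb{N}_\bullet[i])\;\cong\;H^i\big(\mathrm{Hom}_{\mathbb{A}}(\mathbb{P}_\bullet,\mathbb{N})\big)\;=\;\mathbf{Ext}^i_{\mathbb{A},\mathbf{k}}(\mathbb{M},\mathbb{N}),$$
the last equality being the definition of relative Yoneda cohomology computed via a relative projective allowable resolution. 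Composing the three displayed isomorphisms yields the theorem.

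The remaining routine points I would check are that the composite isomorphism is independent of the chosen resolution $\mathbb{P}_\bullet$ and natural in $\mathbb{M}$ and $\mathbb{N}$; this follows from the usual comparison theorem for relative projective allowable resolutions, which supplies a chain map, unique up to homotopy, between any two such resolutions lying over $\mathrm{id}_{\mathbb{M}}$, together with functoriality of the localization. I do not expect a serious obstacle anywhere: the only delicate spot is the identification of $\mathcal{C}(\varepsilon)_\bullet$ with the cone from the earlier proposition and the verification that it is $k$-contractible, and this is immediate from the fact that each $\mathbb{P}^i_\bullet\to\mathbb{M}^i$ is a $k$-split resolution. In short, the statement is a formal consequence of the localization machinery set up in this section, with part c) of the Proposition doing the essential work.
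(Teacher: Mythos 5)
Your argument is correct and is essentially the paper's own proof, just run in the opposite direction: the paper also takes the $\mathbf{GSB}$ resolution, identifies $\mathbf{Ext}^i$ with homotopy classes of maps into $\mathbb{N}_\bullet[i]$, applies part c) of the preceding proposition, and uses that the augmentation is a relative quasi-isomorphism. The extra details you supply (the $k$-contractibility of the cone of the augmentation, independence of the resolution) are correct elaborations of steps the paper leaves implicit.
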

\begin{proof}

To see this, take the $\mathbf{GSB}$ resolution, $\mathcal{B}(\mathbb{M}_\bullet)$, of $\mathbb{M}$. Using the previous proposition
we get
$\mathbf{Ext}^{i}_{\mathbb{A}, \mathbf{k}}(\mathbb{M},\mathbb{N})=
H^{i}(Hom_{\mathbb{A}-\mathbf{mod}}(\mathcal{B}\mathbb{M}_{\bullet},\mathbb{N}))$\\$=
Mor_{\mathcal{K}^{-}(\mathbb{A}-\mathbf{mod})}(\mathcal{B}\mathbb{M}_{\bullet},\mathbb{N}_{\bullet}{[i]})
\cong
Mor_{\mathcal{D}^{-}_{k}(\mathbb{A}-\mathbf{mod})}(\mathcal{B}\mathbb{M}_{\bullet},\mathbb{N}_{\bullet}{[i]})\cong$\\$\cong
Mor_{\mathcal{D}^{-}_{k}(\mathbb{A}-\mathbf{mod})}(\mathbb{M}_{\bullet},\mathbb{N}_{\bullet}{[i]}).$
\end{proof}

The next result gives sufficient conditions for the total complex of a double complex to be homotopic equivalent with its augmented column.
\begin{proposition}

Let $A$ be a $k$-algebra and assume that we have a double complex of
$A$-modules
$$\xymatrix{
&\vdots\ar[d]^{d^1}&\vdots\ar[d]^{d^0}&\vdots\ar[d]^{d_M}\\
\cdots\ar[r]^{d_2}&X_{12}\ar[r]^{d_2}\ar[d]^{d^1}&X_{02}\ar[r]^{\varepsilon_2}\ar[d]^{d^0}&M_2\ar[d]^{d_M}\ar[r]&0\\
\cdots\ar[r]^{d_1}&X_{11}\ar[r]^{d_1}\ar[d]^{d^1}&X_{01}\ar[d]^{d^0}\ar[r]^{\varepsilon_1}&M_1\ar[d]^{d_M}\ar[r]&0\\
\cdots\ar[r]^{d_0}&X_{10}\ar[r]^{d_0}&X_{00}\ar[r]^{\varepsilon_0}&M_0\ar[r]&0}$$
such that:

a) Each row is $k$-contractible. ( $i.e.$ There exist $k$-module
maps

$\xymatrix{X_{(h-1)i}\ar[r]^{t_i^h}&X_{hi}}$  such that
$d_it_i^{h+1}+t_i^hd_i=id_{X_{hi}}$.)

b) The following diagrams are commutative:

$$\xymatrix{X_{hi}\ar[d]_{d^h}&X_{(h-1)i}\ar[d]^{d^{h-1}}\ar[l]_{t_i^h}\\
           X_{h(i-1)}&X_{(h-1)(i-1)}\ar[l]_{t_{i-1}^h}}
\xymatrix{X_{0i}\ar[d]_{d^0}&M_i\ar[d]^{d_M}\ar[l]_{t_i^0}\\
           X_{0(i-1)}&M_{i-1}\ar[l]_{t_{i-1}^0}}$$
           for all
           $h,i\geq 0$, Then

1.$\xymatrix{M_{\bullet}\ar[rr]^{t_{\bullet}^0}&&(TotX_{\bullet
\bullet})}$ and $\xymatrix{(TotX_{\bullet
\bullet})\ar[rr]^{\varepsilon_{\bullet}}&&M_{\bullet}}$ are maps of
complexes of $k$-modules, where $\varepsilon_i=0$ on $X_{jh},
j+h=i$ if $j>0$.

2. $\varepsilon_{\bullet}t_{\bullet}^0=id_{M_{\bullet}}$ and
$t_{\bullet}^0\varepsilon_{\bullet}\sim id_{TotX_{\bullet \bullet}}$
in $Kom^{-}(k-\mathbf{mod}),$ where $\sim $=homotopy equivalence.

\end{proposition}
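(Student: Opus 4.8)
The plan is to build the contracting homotopy for the mapping cone of $t_\bullet^0$ (equivalently, to exhibit $\varepsilon_\bullet$ and $t_\bullet^0$ as mutually inverse up to homotopy) by assembling the row-contractions $t_i^h$ into a single operator on the total complex. First I would verify claim~1: that $\varepsilon_\bullet$ and $t_\bullet^0$ are chain maps. For $\varepsilon_\bullet$ this is the statement that $\varepsilon$ kills everything off the zeroth column and commutes with differentials there, which follows because on $X_{0i}$ the total differential is $d^0 \pm d_0$, the horizontal part $d_0$ lands in $X_{1,i}$ (where $\varepsilon$ is zero), and $\varepsilon_{i-1} d^0 = d_M \varepsilon_i$ is exactly the commutativity of the augmented column. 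For $t_\bullet^0$, I would use the second commuting square in hypothesis~(b), $t_{i-1}^0 d_M = d^0 t_i^0$, together with $d_0 t_i^0 = $ (the piece of $\mathrm{id} - t_i^1 d_0$ landing appropriately) to check that $t_\bullet^0$ commutes with the total differential; the key point is that $d_0 t_i^0$ produces a term in $X_{1,i}$ that must be cancelled, and the discrepancy is governed by the row-contraction identity $d_i t_i^{h+1} + t_i^h d_i = \mathrm{id}$ in degree $h=0$.

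Then $\varepsilon_\bullet t_\bullet^0 = \mathrm{id}_{M_\bullet}$ is immediate from the $h=0$ row identity applied with $d_{-1} = 0$: $d_i t_i^1 + t_i^0 d_i = \mathrm{id}$ is the wrong one; the right statement is $\varepsilon_i t_i^0 = \mathrm{id}_{M_i}$, which I would get from the contraction identity for the augmented row, i.e. the row $\cdots \to X_{0i} \xrightarrow{\varepsilon_i} M_i \to 0$ is $k$-contractible via $t_i^0$ in the sense that $\varepsilon_i t_i^0 = \mathrm{id}$ (this is part of what "$k$-contractible row" encodes once we include the $M$-column, and it is consistent with the commuting squares). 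The substantive step is $t_\bullet^0 \varepsilon_\bullet \sim \mathrm{id}_{\mathrm{Tot}\,X_{\bullet\bullet}}$. Here I would define a $k$-linear endomorphism $s$ of the total complex of bidegree $(+1,0)$ (raising the column index $h$) by declaring $s = t_i^{h+1}$ on $X_{h,i}$ — more precisely, on $X_{h,i}$ set $s$ to be $\pm t_i^{h+1} : X_{h,i} \to X_{h+1,i}$ with signs chosen to match the total-complex sign convention. I would then compute $d_{\mathrm{Tot}} s + s\, d_{\mathrm{Tot}}$ on $X_{h,i}$: the horizontal parts give $d_i t_i^{h+1} + t_i^h d_i = \mathrm{id}_{X_{h,i}}$ by hypothesis~(a), while the vertical parts $d^h s + s\, d^h$ (suitably signed) cancel precisely because of the first commuting square in~(b), $d^{h-1} t_i^h = t_{i-1}^h d^h$ (reindexed). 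On the zeroth column, the "missing" term $s\, d_{\mathrm{Tot}}$ involving the map $M_i \to X_{0i}$ is exactly $t_i^0 \varepsilon_i$, so the homotopy identity reads $d_{\mathrm{Tot}} s + s\, d_{\mathrm{Tot}} = \mathrm{id} - t_\bullet^0 \varepsilon_\bullet$ there, which is what we want.

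The main obstacle I anticipate is bookkeeping of signs: the total complex of a double complex in this (first-quadrant-style, augmented) layout carries sign twists, and the operator $s$ built from the $t_i^h$ must be decorated with signs so that the vertical contributions to $d_{\mathrm{Tot}} s + s\, d_{\mathrm{Tot}}$ telescope to zero using~(b) rather than adding up. A clean way to organize this is to fix the convention that the total differential on $X_{h,i}$ (total degree $h+i$) is $(-1)^h d_i + d^h$ (with $d_i$ horizontal, $d^h$ vertical), set $s|_{X_{h,i}} = (-1)^h t_i^{h+1}$, and then the cross-terms $d^h s \pm s\, d^h$ vanish term-by-term against the commuting squares while the $d_i s \pm s\, d_i$ terms collapse via~(a). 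Once the signs are pinned down the verification is a direct, if tedious, matrix computation entirely analogous to the cone computation in Proposition~3.2 above, so I would present the definitions of $\varepsilon_\bullet$, $t_\bullet^0$, and $s$ explicitly and then state that the identities of claims~1 and~2 follow by the indicated termwise computation using (a) and (b), flagging only the sign convention as the point requiring care.
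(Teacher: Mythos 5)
Your construction is essentially the paper's own proof: the homotopy $s|_{X_{h,i}}=t_i^{h+1}$ is exactly the map $h^n=(t_0^{n+1},t_1^{n},\ldots,t_n^1,0)$ used there, and part~1 together with $\varepsilon_{\bullet}t_{\bullet}^0=id_{M_\bullet}$ is derived from the same ingredients (the squares in~(b), $\varepsilon_id_i=0$, and the contraction of the augmented row). One small simplification: the horizontal differential \emph{decreases} the column index $h$, so $X_{0i}$ has no horizontal component inside $TotX_{\bullet\bullet}$ and $t_{\bullet}^0$ is a chain map directly from the second square in~(b) --- there is no term in $X_{1i}$ to cancel, and the sign bookkeeping you flag is the only point requiring care.
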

\begin{proof}
1. The map $t_{\bullet}^0 $ is a map of complexes by b) and
$\varepsilon_{\bullet}$ is a map of complexes because
$d_M\varepsilon_{i+1}=d^0\varepsilon_i$ and $\varepsilon_id_i=0$.

2. The only thing to prove here is
$t_{\bullet}^0\varepsilon_{\bullet}\sim id_{TotX_{\bullet \bullet}}$
in $Kom^{-}(k-\mathbf{mod}).$ For $n\geq 0$ we define the map
$\xymatrix{(TotX_{\bullet \bullet})^n\ar[r]^{h^n}&(TotX_{\bullet
\bullet})^{n+1}}$ by $h^n:=(t_0^{n+1},t_1^{n},\ldots,t_n^1,0)$. It
is a simple exercise to check that $h^{\bullet}d_{TotX_{\bullet
\bullet}}+d_{TotX_{\bullet
\bullet}}h^{\bullet}=id-t_{\bullet}^0\varepsilon_{\bullet}$.
\end{proof}

The proposition is a key ingredient in justifying the next theorem.
\begin{theorem}
For each $\mathbb{M}_{\bullet}\in\mathcal{D}^{-}_{k}(\mathbb{A}-\mathbf{mod})$ there exist a complex of relative projective $\mathbb{A}'-$modules $\mathcal{U}\mathbb{M}_{\bullet}\in\mathcal{D}^{-}_{k}(\mathbb{A}'-\mathbf{mod})$  and a relative quasi-isomorphism $\varepsilon:\mathcal{U}\mathbb{M}_\bullet\rightarrow\mathbb{M}'_\bullet$ in $\mathcal{D}^{-}_{k}(\mathbb{A}'-\mathbf{mod})$.
\end{theorem}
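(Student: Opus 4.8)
The idea is to build $\mathcal{U}\mathbb{M}_\bullet$ by replacing each term of $\mathbb{M}_\bullet$ with a (canonical, functorial) relative projective allowable resolution in $\mathbb{A}'$-$\mathbf{mod}$ and then taking a total complex. Concretely, first apply the subdivision functor to get $\mathbb{M}'_\bullet \in \mathcal{D}^-_k(\mathbb{A}'$-$\mathbf{mod})$; note $\mathbf{d}^*$ is exact and preserves allowability, so $\mathbb{M}'_\bullet$ is a complex with the same (relative) homological features as $\mathbb{M}_\bullet$. For each $\mathbb{M}'_n$ take the $\mathbf{GSB}$ resolution $\mathcal{B}(\mathbb{M}'_n)_\bullet \to \mathbb{M}'_n$ guaranteed by the functoriality of the generalized simplicial bar construction recalled in Section~1; functoriality means the differentials $d_M$ of $\mathbb{M}'_\bullet$ lift to maps of these resolutions, producing a double complex $X_{\bullet\bullet}$ of relative projective $\mathbb{A}'$-modules whose $n$-th augmented column resolves $\mathbb{M}'_n$. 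Set $\mathcal{U}\mathbb{M}_\bullet := \mathrm{Tot}(X_{\bullet\bullet})$ and let $\varepsilon$ be the augmentation onto the column $\mathbb{M}'_\bullet$.

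**Key steps, in order.** (1) Verify that each row of $X_{\bullet\bullet}$ is $k$-contractible: this is exactly the statement that the $\mathbf{GSB}$ resolution is an allowable resolution, i.e. $k$-split in each $\mathcal{C}'$-object, so the row over a fixed $i$ splits over $k$, and moreover the bar construction comes equipped with explicit contracting homotopies $t^h_i$ — this is the standard extra-degeneracy/contraction built into the simplicial bar resolution. (2) Verify the commutativity condition (b) of Proposition~3.9 relating the $t^h_i$ to the vertical differentials $d^h$ (the maps of complexes coming from functoriality of $\mathcal{B}$ applied to $d_M$): because the vertical maps are induced functorially by the bar construction and the $t^h_i$ are the natural contracting homotopies of that same construction, naturality gives exactly the required squares. (3) Apply Proposition~3.9 to conclude $\varepsilon_\bullet : \mathrm{Tot}(X_{\bullet\bullet}) \to \mathbb{M}'_\bullet$ and $t^0_\bullet$ in the other direction are mutually inverse up to $k$-homotopy; hence $\varepsilon$ is a relative quasi-isomorphism by Definition~3.1 (or Proposition~3.2). (4) Confirm $\mathrm{Tot}(X_{\bullet\bullet})$ lies in $Kom^-(\mathbb{A}'$-$\mathbf{mod})$, i.e. is bounded to the right: since $\mathbb{M}_\bullet$ is right-bounded and each $\mathcal{B}(\mathbb{M}'_n)_\bullet$ is concentrated in non-negative homological degree, the total complex is right-bounded, and each term is a finite direct sum of relative projectives, hence relative projective.

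**Main obstacle.** The technical heart is step~(2): one must check that the contracting $k$-homotopies of the bar resolution are compatible — in the sense of the commuting squares of Proposition~3.9 — with the vertical differentials obtained by functorially applying $\mathcal{B}$ to $d_M : \mathbb{M}'_{n+1} \to \mathbb{M}'_n$. This is where the specific combinatorial structure of the $\mathbf{GSB}$ resolution is used rather than just its formal properties: the contraction $t^h_i$ is built from an extra degeneracy, and one needs that this extra degeneracy is natural in the module argument (so that it commutes with the map $\mathcal{B}(d_M)$ up to the shift recorded in condition (b)). Everything else — the existence of the double complex, right-boundedness, relative projectivity of the total complex, and the final invocation of Proposition~3.9 — is routine bookkeeping once this naturality is in hand. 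A secondary point worth a sentence in the write-up is that $\mathbf{d}^*$ commutes suitably with the bar construction, which is why the column $\mathbb{M}'_\bullet$ (rather than some unrelated resolution) appears as the augmentation target; but this follows from the functoriality already recorded in the discussion of Theorem~2.3.
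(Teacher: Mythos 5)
Your argument establishes the statement as literally written, and its skeleton --- resolve each term of the complex, assemble the resolutions into a double complex, pass to the total complex, and invoke the double-complex proposition (which is Proposition 3.7 in the paper's numbering, not 3.9) --- is exactly the paper's. The one substantive divergence is the choice of termwise resolution. You resolve each $\mathbb{M}'_n$ by its own $\mathbf{GSB}$ resolution computed in $\mathbb{A}'$-$\mathbf{mod}$, whereas the paper resolves $\mathbb{M}'_n$ by the resolution of Theorem 2.3, obtained by ``spreading out'' over $\mathcal{C}'$ the $\mathbf{GSB}$ resolution of $\mathbb{M}_n$ taken in $\mathbb{A}$-$\mathbf{mod}$. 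Both choices are functorial relative projective allowable resolutions equipped with objectwise contracting $k$-homotopies natural in the module argument, so both satisfy conditions (a) and (b) of Proposition 3.7 at each $\tau\in\mathcal{C}'$ and both yield a complex of relative projectives together with a relative quasi-isomorphism $\varepsilon:\mathcal{U}\mathbb{M}_\bullet\rightarrow\mathbb{M}'_\bullet$; your identification of the naturality of the extra degeneracy as the technical crux is also the right one. What the paper's choice buys, and what yours forfeits, is the last sentence of the paper's proof: with the spread-out resolutions, $\mathbf{d}_!\mathcal{U}\mathbb{M}_\bullet$ is the totalization of the $\mathbf{GSB}$ resolutions of the $\mathbb{M}_n$ and $\mathbf{d}_!(\varepsilon)$ is again a relative quasi-isomorphism onto $\mathbb{M}_\bullet$. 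That property is not automatic --- the paper warns right after Theorem 2.3 that $\mathbf{d}_!$ does not in general carry allowable resolutions to allowable resolutions, since it is not exact --- and there is no reason for it to hold for the $\mathbf{GSB}$ resolution of $\mathbb{M}'_n$ formed upstairs; yet it is precisely what the proof of Theorem 3.9 uses to push the roof $(\varepsilon,fq)$ down to a roof over $\mathbb{A}$. So for the present theorem in isolation your construction is acceptable, but if $\mathcal{U}\mathbb{M}_\bullet$ is to feed into the generalized Invariance Theorem you must use the Theorem 2.3 resolutions (or separately prove the $\mathbf{d}_!$ compatibility for your choice).
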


\begin{proof}
Using theorem 2.3, for each term $\mathbb{M}'_i$ of the complex $\mathbb{M}_\bullet$, $i\geq 0$ we
obtain a double complex of relative projective $\mathbb{A}'$-modules with augmented column $\mathbb{M}'_{\bullet}$.  The rows of this complex are resolutions  obtained  by ``spreading out" the $\mathbf{GSB}$ resolution of $\mathbb{M}$ over $\mathcal{C}'$. In addition, each such row has a contracting homotopy and for each $\tau\in\mathcal{C}'$ we obtain a double complex of $(\mathbb{A}')^\tau$-modules which satisfies the conditions of the previous proposition.  Thus, by taking the total complex of the double complex with augmented column $\mathbb{M}'_\bullet$ we obtain a complex of relative projective $\mathbb{A}'$-modules,
$\mathcal{U}\mathbb{M}_{\bullet}$ and a relative
quasi-isomorphism
$\xymatrix{\mathcal{U}\mathbb{M}_{\bullet}\ar[r]^{\varepsilon}&\mathbb{M}'_{\bullet}}.$ Moreover, by applying the functor $\mathbf{d}_!$ to this relative quasi-isomorphism and using theorem 2.3 in combination with the fact that $\mathbf{d}^*$ is full and faithful, we get that $\mathbf{d}_!(\varepsilon)$ is a relative quasi-isomorphism from the total complex obtained by taking the $\mathbf{GSB}$ resolution of each   $\mathbb{M}_i$, to $\mathbb{M}_\bullet$.
\end{proof}
Since the functor $\mathbf{d}^*:\mathbb{A}-\mathbf{mod}\rightarrow \mathbb{A}'-\mathbf{mod}$  is exact and preserves allowable maps then it  preserves  relative quasi-isomorphisms as well. Thus, it induces a functor $\mathbf{d}^*$  at the level of relative derived categories.  We now prove the following generalization of the $Invariance$ $Theorem$.
\begin{theorem}
The functor $\mathbf{d}^*:\mathcal{D}_k^{-}(\mathbb{A}-\mathbf{mod})\rightarrow\mathcal{D}_k^{-}(\mathbb{A'}-\mathbf{mod})$ is full and faithful. That is,
$$\xymatrix{Mor_{\mathcal{D}_k^{-}(\mathbb{A}-\mathbf{mod})}(\mathbb{M}_\bullet,\mathbb{N}_\bullet)\ar[r]^{\mathbf{d}^*} & Mor_{\mathcal{D}_k^{-}(\mathbb{A'}-\mathbf{mod})}(\mathbb{M'}_\bullet,\mathbb{N'}_\bullet)}$$ is an isomorphism of sets for all $\mathbb{M}_\bullet$ and $\mathbb{N}_\bullet$ in $\mathcal{D}_k^{-}(\mathbb{A}-\mathbf{mod})$.
\end{theorem}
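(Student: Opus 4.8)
The plan is to compute both morphism sets by replacing $\mathbb{M}_\bullet$ with a relative projective resolution in such a way that the two resolutions correspond under the adjunction $\mathbf{d}_!\dashv\mathbf{d}^*$, and then invoke that adjunction. Fix $\mathbb{M}_\bullet,\mathbb{N}_\bullet\in\mathcal{D}^-_k(\mathbb{A}-\mathbf{mod})$. Theorem 3.8 supplies a complex $\mathcal{U}\mathbb{M}_\bullet$ of relative projective $\mathbb{A}'$-modules with a relative quasi-isomorphism $\varepsilon\colon\mathcal{U}\mathbb{M}_\bullet\to\mathbb{M}'_\bullet$, together with a relative quasi-isomorphism $p\colon\mathbf{d}_!\mathcal{U}\mathbb{M}_\bullet\to\mathbb{M}_\bullet$ (the ``$\mathbf{d}_!(\varepsilon)$'' of that theorem, that is $p=\epsilon_{\mathbb{M}_\bullet}\circ\mathbf{d}_!(\varepsilon)$ where $\epsilon$ is the counit of $\mathbf{d}_!\dashv\mathbf{d}^*$, an isomorphism because $\mathbf{d}^*$ is full and faithful at the module level). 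Since $\mathbf{d}_!$ preserves relative projectives, $\mathbf{d}_!\mathcal{U}\mathbb{M}_\bullet$ is a complex of relative projective $\mathbb{A}$-modules. Writing $Q_{\mathbb{A}},Q_{\mathbb{A}'}$ for the localization functors, $p$ and $\varepsilon$ become isomorphisms under them, so precomposition gives
\[
Mor_{\mathcal{D}^-_k(\mathbb{A}-\mathbf{mod})}(\mathbb{M}_\bullet,\mathbb{N}_\bullet)\;\cong\;Mor_{\mathcal{D}^-_k(\mathbb{A}-\mathbf{mod})}(\mathbf{d}_!\mathcal{U}\mathbb{M}_\bullet,\mathbb{N}_\bullet),
\]
and likewise $Mor_{\mathcal{D}^-_k(\mathbb{A}'-\mathbf{mod})}(\mathbb{M}'_\bullet,\mathbb{N}'_\bullet)\cong Mor_{\mathcal{D}^-_k(\mathbb{A}'-\mathbf{mod})}(\mathcal{U}\mathbb{M}_\bullet,\mathbb{N}'_\bullet)$; by Proposition 3.5(c) both of these right-hand sides are computed already in the homotopy categories $\mathcal{K}^-$.

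Next I would bring in the adjunction. The module-level adjunction $\mathbf{d}_!\dashv\mathbf{d}^*$ is additive and natural, hence it extends termwise to complexes and descends to a natural bijection
\[
Mor_{\mathcal{K}^-(\mathbb{A}-\mathbf{mod})}(\mathbf{d}_!\mathcal{U}\mathbb{M}_\bullet,\mathbb{N}_\bullet)\;\cong\;Mor_{\mathcal{K}^-(\mathbb{A}'-\mathbf{mod})}(\mathcal{U}\mathbb{M}_\bullet,\mathbf{d}^*\mathbb{N}_\bullet)=Mor_{\mathcal{K}^-(\mathbb{A}'-\mathbf{mod})}(\mathcal{U}\mathbb{M}_\bullet,\mathbb{N}'_\bullet),
\]
both on the nose and at the level of homotopy classes, since a null-homotopy of a chain map is carried to a null-homotopy of its mate. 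Composing the three identifications produces a bijection between $Mor_{\mathcal{D}^-_k(\mathbb{A}-\mathbf{mod})}(\mathbb{M}_\bullet,\mathbb{N}_\bullet)$ and $Mor_{\mathcal{D}^-_k(\mathbb{A}'-\mathbf{mod})}(\mathbb{M}'_\bullet,\mathbb{N}'_\bullet)$.

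The actual work is to identify this composite bijection with the map induced by $\mathbf{d}^*$. The key ingredient is the identity $p'\circ\eta_{\mathcal{U}\mathbb{M}_\bullet}=\varepsilon$, where $p'=\mathbf{d}^*(p)$ and $\eta\colon\mathrm{Id}\to\mathbf{d}^*\mathbf{d}_!$ is the unit; it follows from naturality of $\eta$ at $\varepsilon\colon\mathcal{U}\mathbb{M}_\bullet\to\mathbf{d}^*\mathbb{M}_\bullet$ together with the triangle identity $\mathbf{d}^*(\epsilon_{\mathbb{Y}})\circ\eta_{\mathbf{d}^*\mathbb{Y}}=\mathrm{id}_{\mathbf{d}^*\mathbb{Y}}$. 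Now given $\alpha\in Mor_{\mathcal{D}^-_k(\mathbb{A}-\mathbf{mod})}(\mathbb{M}_\bullet,\mathbb{N}_\bullet)$, write $\alpha=Q_{\mathbb{A}}(g)\circ Q_{\mathbb{A}}(p)^{-1}$ with $g$ a chain map (possible since $\mathbf{d}_!\mathcal{U}\mathbb{M}_\bullet$ is relative projective and $Q_{\mathbb{A}}(p)$ is invertible); then, using that $\mathbf{d}^*$ commutes with localization, $\mathbf{d}^*(\alpha)\circ Q_{\mathbb{A}'}(\varepsilon)=Q_{\mathbb{A}'}(\mathbf{d}^*(g))\circ Q_{\mathbb{A}'}(p')^{-1}\circ Q_{\mathbb{A}'}(\varepsilon)=Q_{\mathbb{A}'}\bigl(\mathbf{d}^*(g)\circ\eta_{\mathcal{U}\mathbb{M}_\bullet}\bigr)$, and $\mathbf{d}^*(g)\circ\eta_{\mathcal{U}\mathbb{M}_\bullet}$ is exactly the adjunction-mate of $g$. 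Reading this both ways gives faithfulness (if $\mathbf{d}^*(\alpha)=0$ then the mate of $g$ is null-homotopic by Proposition 3.5(c) for the relative projective complex $\mathcal{U}\mathbb{M}_\bullet$, hence so is $g$, hence $\alpha=0$) and fullness (any $\beta$ pulled back along $Q_{\mathbb{A}'}(\varepsilon)$ is $Q_{\mathbb{A}'}(h)$ for a chain map $h$ out of $\mathcal{U}\mathbb{M}_\bullet$, again by Proposition 3.5(c), and $\alpha:=Q_{\mathbb{A}}(h^{\flat})\circ Q_{\mathbb{A}}(p)^{-1}$, with $h^{\flat}$ the mate of $h$, satisfies $\mathbf{d}^*(\alpha)=\beta$).

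I expect the main obstacle to be precisely this compatibility bookkeeping: recognizing the relative quasi-isomorphism of Theorem 3.8 as the adjunction-mate $\varepsilon^{\flat}$, keeping unit and counit straight, and checking that the adjunction isomorphism on homotopy classes intertwines the two localizations. Everything else — the reduction from $\mathcal{D}^-_k$ to $\mathcal{K}^-$ via Proposition 3.5(c) and the adjunction itself — is formal once Theorem 3.8 and Proposition 3.5 are in hand. As a sanity check, when $\mathbb{M}_\bullet$ and $\mathbb{N}_\bullet$ are single modules the statement also drops out of Theorem 3.6 combined with the Invariance Theorem 2.2.
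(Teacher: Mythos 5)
Your proposal is correct and follows essentially the same route as the paper: both arguments hinge on the projective replacement $\mathcal{U}\mathbb{M}_\bullet\rightarrow\mathbb{M}'_\bullet$ of Theorem 3.8 together with the induced relative quasi-isomorphism $\mathbf{d}_!\mathcal{U}\mathbb{M}_\bullet\rightarrow\mathbb{M}_\bullet$, on Proposition 3.5 to rigidify morphisms out of these relative projective complexes, and on the unit/counit computation $\mathbf{d}^*(\varepsilon_{\mathbb{M}_\bullet})(\mathbf{d}^*\mathbf{d}_!)(\varepsilon)\eta_{\mathcal{U}\mathbb{M}_\bullet}=\varepsilon$. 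The difference is purely presentational: you package everything as a composite of bijections through the adjunction on homotopy categories, whereas the paper verifies surjectivity and injectivity by explicit manipulation and comparison of roofs.
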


\begin{proof}

Let  $$\xymatrix{&\mathbb{X}_\bullet\ar[dl]_s\ar[dr]^f\\ \mathbb{M}'_\bullet && \mathbb{N}'_\bullet}$$ be a roof in $Mor_{\mathcal{D}_k^{-}(\mathbb{A'}-\mathbf{mod})}(\mathbb{M'}_\bullet,\mathbb{N'}_\bullet)$. Take $\mathcal{U}\mathbb{M}_\bullet$ and $\varepsilon$ as in the previous theorem. Since $\xymatrix{\mathbb{X}_\bullet\ar[r]^s & \mathbb{M}'_\bullet}$ is a relative quasi-isomorphism and $\mathcal{U}\mathbb{M}_\bullet$ is a complex of relative projective $\mathbb{A}'$-modules then proposition 3.5 implies that there exist $q\in Mor_{\mathcal{K}^{-}(\mathbb{A}-\mathbf{mod})}(\mathcal{U}\mathbb{M}_\bullet, \mathbb{X}_\bullet)$ such that $qs=\varepsilon$. Moreover, $q$ is a relative quasi-isomorphism because both $s$ and $\varepsilon$ are. We have now the equivalence of roofs \begin{center} $\xymatrix{&\mathbb{X}_\bullet\ar[dl]_s\ar[dr]^f\\ \mathbb{M}'_\bullet && \mathbb{N}'_\bullet}$ and $\xymatrix{&\mathcal{U}\mathbb{M}_\bullet\ar[dl]_\varepsilon\ar[dr]^{fq}\\ \mathbb{M}'_\bullet && \mathbb{N}'_\bullet}$\end{center} because of the following commutative diagram $$\xymatrix{ &&\mathcal{U}\mathbb{M}_\bullet\ar[dl]_q\ar[dr]^{{id}_{\mathcal{U}\mathbb{M}_\bullet}}\\ &\mathbb{X}_\bullet\ar[dl]_s\ar[drrr]^f &&\mathcal{U}\mathbb{M}_\bullet\ar[dlll]_\varepsilon\ar[dr]^{fq}\\ \mathbb{M}'_\bullet &&&& \mathbb{N}'_\bullet}$$
If $\varepsilon_{\mathbb{M}_\bullet}$ and $\varepsilon_{\mathbb{N}_\bullet}$ denote the maps of complexes induced by the counit of the adjunction
$\xymatrix{\mathbb{A}-\mathbf{mod}\ar@<1ex>[r]^(.46){\mathbf{d}^*}
&\mathbb{A'}-\mathbf{mod,}\ar@<1ex>[l]^{\mathbf{d}_!}}$ then note that they are isomorphisms since the functor $\mathbf{d}^*$ is full and faithful. In addition,
we have that $\mathbf{d}_!(\varepsilon)$ is a relative quasi-isomorphism in $\mathcal{D}_k^{-}(\mathbb{A}-\mathbf{mod})$, so the roof $$\xymatrix{&\mathbf{d}_!\mathcal{U}\mathbb{M}_\bullet\ar[dl]_{\varepsilon_{\mathbb{M}_\bullet}\mathbf{d}_!(\varepsilon)}\ar[dr]^{\varepsilon_{\mathbb{N}_\bullet}\mathbf{d}_!(fq)}\\ \mathbb{M}_\bullet && \mathbb{N}_\bullet}$$ exists in this category.

We show now that the image of this roof through $\mathbf{d}^*$ is equivalent to $$\xymatrix{&\mathcal{U}\mathbb{M}_\bullet\ar[dl]_\varepsilon\ar[dr]^{fq}\\ \mathbb{M}'_\bullet && \mathbb{N}'_\bullet}$$
To see this note that if $\eta$ is the unit of the adjunction, naturally extended to complexes, we have $\mathbf{d}^*(\varepsilon_{\mathbb{M}_\bullet})\eta_{\mathbf{d}^*({\mathbb{M}_\bullet})}=id_{\mathbf{d}^*({\mathbb{M}_\bullet})}$. In addition, the functoriality of $\eta$ implies that $(\mathbf{d}^*\mathbf{d}_!)(\varepsilon)\eta_{\mathcal{U}\mathbb{M}_\bullet}=\eta_{\mathbf{d}^*(\mathbb{M}_\bullet)}\varepsilon$ and
$(\mathbf{d}^*\mathbf{d}_!)(fq)\eta_{\mathcal{U}\mathbb{M}_\bullet}=\eta_{\mathbf{d}^*(\mathbb{M}_\bullet)}fq$. Thus, we have $\mathbf{d}^*(\varepsilon_{\mathbb{M}_\bullet})(\mathbf{d}^*\mathbf{d}_!)(\varepsilon)\eta_{\mathcal{U}{\mathbb{M}_\bullet}}=\varepsilon$.
and  $\mathbf{d}^*(\varepsilon_{\mathbb{N}_\bullet})(\mathbf{d}^*\mathbf{d}_!)(fq)\eta_{\mathcal{U}{\mathbb{M}_\bullet}}=fq$, so the following diagram is commutative and the surjectivity is proved.
$$\xymatrix{&&\mathcal{U}\mathbb{M}_\bullet\ar[dl]_{id}\ar[dr]^{\eta_{\mathcal{U}\mathbb{M}_\bullet}}\\
&\mathcal{U}\mathbb{M}_\bullet\ar[dl]_{\varepsilon}\ar[drrr]^{fq} && \mathbf{d}^*\mathbf{d}_!\mathcal{U}\mathbb{M}_\bullet\ar[dlll]_(.55){\mathbf{d}^*(\varepsilon_{\mathbb{M}_\bullet})(\mathbf{d}^*\mathbf{d}_!)(\varepsilon)}
\ar[dr]^{\mathbf{d}^*(\varepsilon_{\mathbb{N}_\bullet})(\mathbf{d}^*\mathbf{d}_!)(fq)}\\
\mathbb{M}_\bullet&&&&\mathbb{N}_\bullet}$$
To prove that $\mathbf{d}^*$ is injective assume that the roofs \begin{center} $\xymatrix{&\mathbb{S}'_\bullet\ar[dr]^{\mathbf{d}^*f}\ar[dl]_{\mathbf{d}^*s}\\ \mathbb{M}'_\bullet &&\mathbb{N}'_\bullet}$ and $\xymatrix{&\mathbb{T}'_\bullet\ar[dr]^{\mathbf{d}^*g}\ar[dl]_{\mathbf{d}^*t}\\ \mathbb{M}'_\bullet &&\mathbb{N}'_\bullet}$ \end{center} are equivalent in $\mathcal{D}_k^{-}(\mathbb{A}'-\mathbf{mod})$. Thus, there exist a commutative diagram
$$\xymatrix{&&\mathbb{X}_\bullet\ar[dl]_{u}\ar[dr]^{h}\\ &\mathbb{S}'_\bullet\ar[dl]_{\mathbf{d}^*s}\ar[drrr]^{\mathbf{d}^*f} && \mathbb{T}'_\bullet\ar[dlll]_{\mathbf{d}^*t}\ar[dr]^{\mathbf{d}^*g}\\\mathbb{M}'_\bullet &&&& \mathbb{N}'_\bullet}$$ where $u$ is a relative quasi-isomorphism.
We will try to replace $\mathbb{X}_\bullet$ with a "better" complex. For this let $\alpha:\mathcal{U}\mathbb{S}_\bullet\rightarrow\mathbb{S}'_\bullet$ be a relative quasi-isomorphism as in the previous theorem. Because $u$ is a relative quasi-isomorphism and $\mathcal{U}\mathbb{S}$ is a complex or relative projective $\mathbb{A}'$-modules, proposition 3.5 implies that
$Mor_{\mathcal{K}^{-}(\mathbb{A}'-\mathbf{mod})}
(\mathcal{U}\mathbb{S}_{\bullet},\mathcal{C}{(u)}_{\bullet})=0$, so there exist a map $\beta:\mathcal{U}\mathbb{S}_\bullet\rightarrow\mathbb{X}_\bullet$ such that $u\beta=\alpha$. Moreover, since $\alpha$ and $u$ are relative quasi-isomorphisms then so is $\beta$.
This implies the commutativity of the roof
$$\xymatrix{&&\mathcal{U}\mathbb{S}_\bullet\ar[dl]_{\alpha=u\beta}\ar[dr]^{h\beta}\\ &\mathbb{S}'_\bullet\ar[dl]_{\mathbf{d}^*s}\ar[drrr]^{\mathbf{d}^*f} && \mathbb{T}'_\bullet\ar[dlll]_{\mathbf{d}^*t}\ar[dr]^{\mathbf{d}^*g}\\\mathbb{M}'_\bullet &&&& \mathbb{N}'_\bullet}$$
Applying the functor $\mathbf{d}_!$ we get the commutative diagram $$\xymatrix{&&\mathbf{d}_!\mathcal{U}{\mathbb{S}_\bullet}\ar[dl]_{\mathbf{d}_!\alpha}\ar[dr]^{\mathbf{d}_!h\beta}\\&\mathbf{d}_!\mathbb{S}'_\bullet
\ar[dl]_{\mathbf{d}_!\mathbf{d}^*s}\ar[drrr]^{\mathbf{d}_!\mathbf{d}^*f}&&\mathbf{d}_!\mathbb{T}'_\bullet\ar[dlll]_{\mathbf{d}_!\mathbf{d}^*t}\ar[dr]^
{\mathbf{d}_!\mathbf{d}^*g}\\ \mathbf{d}_!\mathbb{M}'_\bullet &&&& \mathbf{d}_!\mathbb{N}'_\bullet}$$
Because $\mathbf{d}_!(\alpha)$, $\varepsilon_{\mathbb{M}_\bullet}$ and $\mathbf{d}_!\mathbf{d}^*(s)$ are relative quasi-isomorphisms in $\mathcal{D}_k^{-}(\mathbb{A}-\mathbf{mod})$ we get that the last diagram is an equivalence of roofs in $\mathcal{D}_k^{-}(\mathbb{A}-\mathbf{mod})$.
Since
$\varepsilon_{\mathbb{R}_\bullet}:\mathbf{d}_!\mathbb{R}'_\bullet\rightarrow\mathbb{R}_\bullet$ is an isomorphism for all $\mathbb{R}_\bullet\in\mathcal{D}_k^{-}(\mathbb{A}-\mathbf{mod})$ we get that the roofs  \begin{center} $\xymatrix{&\mathbb{S}_\bullet\ar[dr]^{f}\ar[dl]_{s}\\ \mathbb{M}_\bullet &&\mathbb{N}_\bullet}$ and $\xymatrix{&\mathbb{T}_\bullet\ar[dr]^{g}\ar[dl]_{t}\\ \mathbb{M}_\bullet &&\mathbb{N}_\bullet}$ \end{center} are
equivalent in $\mathcal{D}_k^{-}(\mathbb{A}-\mathbf{mod})$, so injectivity is proved.
\end{proof}
We obtain as a corollary M. Gerstenhaber and S. D. Schack's $Invariance$ $ Theorem$.
\begin{corollary}
\begin{bf} The Invariance Theorem\end{bf}

The natural transformation induced by $\mathbf{d}^*$ induces and isomorphism $$\mathbf{Ext^\bullet_{\mathbb{A},k}}(-, -)\longrightarrow\mathbf{Ext^\bullet_{\mathbb{A'},k}}((-)', (-)')$$
\end{corollary}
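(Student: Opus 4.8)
The plan is to deduce the Invariance Theorem (Corollary 3.11) directly from the two results we have just established: Theorem 3.12, which says that $\mathbf{d}^*$ is full and faithful at the level of relative derived categories, and Theorem 3.6, which identifies the relative Yoneda $\mathbf{Ext}$ groups with morphism sets in these categories. Concretely, I would argue as follows. By Theorem 3.6 applied in $\mathbb{A}$-$\mathbf{mod}$, we have a natural isomorphism
\[
\mathbf{Ext}^{i}_{\mathbb{A},\mathbf{k}}(\mathbb{M},\mathbb{N})\;\simeq\;Mor_{\mathcal{D}^{-}_{k}(\mathbb{A}-\mathbf{mod})}(\mathbb{M}_\bullet,\mathbb{N}_\bullet[i]),
\]
and by the same theorem applied in $\mathbb{A}'$-$\mathbf{mod}$ to the subdivided modules $\mathbb{M}'$ and $\mathbb{N}'$, we get
\[
\mathbf{Ext}^{i}_{\mathbb{A}',\mathbf{k}}(\mathbb{M}',\mathbb{N}')\;\simeq\;Mor_{\mathcal{D}^{-}_{k}(\mathbb{A}'-\mathbf{mod})}(\mathbb{M}'_\bullet,\mathbb{N}'_\bullet[i]).
\]
Theorem 3.12 gives a bijection between the two morphism sets (taking $\mathbb{N}_\bullet$ there to be $\mathbb{N}_\bullet[i]$, which is again an object of $\mathcal{D}^-_k(\mathbb{A}-\mathbf{mod})$, and noting that $\mathbf{d}^*$ commutes with the shift functor since it is exact). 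Composing the three maps yields the desired isomorphism $\mathbf{Ext}^\bullet_{\mathbb{A},k}(-,-)\to\mathbf{Ext}^\bullet_{\mathbb{A}',k}((-)',(-)')$.

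The key remaining points are then bookkeeping: first, that the composite isomorphism is exactly the map induced by $\mathbf{d}^*$ on cohomology, and second, that it is natural in both variables. For the first point I would trace through the identifications in the proof of Theorem 3.6: the isomorphism there is realized by taking the $\mathbf{GSB}$ resolution $\mathcal{B}\mathbb{M}_\bullet$ and computing $H^i(\mathrm{Hom}_{\mathbb{A}}(\mathcal{B}\mathbb{M}_\bullet,\mathbb{N}))$; the functor $\mathbf{d}^*$ sends this to the analogous complex over $\mathbb{A}'$, and one checks — using Theorem 2.3, which says precisely that the $\mathbf{GSB}$ resolution is obtained from a relative projective allowable resolution of $\mathbb{N}'$ by applying $\mathbf{d}_!$ — that the resulting map on $H^i$ coincides with the $\mathbf{d}^*$ arrow inside $\mathcal{D}^-_k(\mathbb{A}'-\mathbf{mod})$ under the canonical identification. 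For naturality, I would observe that each of the three isomorphisms above is natural: Theorem 3.6's isomorphism is natural because the $\mathbf{GSB}$ construction is functorial (as stated in Section 1), and Theorem 3.12's bijection is natural because it is induced by a functor.

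The main obstacle I expect is precisely the compatibility check in the first point — confirming that the abstract equivalence supplied by Theorem 3.12 restricts, under the concrete $\mathbf{Ext}$-identifications, to the classical map "induced by $\mathbf{d}^*$" that Gerstenhaber and Schack wrote down. This requires unwinding the roof-calculus description of $\mathbf{d}^*$ in the proof of Theorem 3.12 and matching it against the chain-level map $\mathrm{Hom}_{\mathbb{A}}(\mathcal{B}\mathbb{M}_\bullet,\mathbb{N})\to\mathrm{Hom}_{\mathbb{A}'}(\mathbf{d}^*\mathcal{B}\mathbb{M}_\bullet,\mathbb{N}')$, and then invoking Theorem 2.3 to recognize $\mathbf{d}^*\mathcal{B}\mathbb{M}_\bullet$ (up to relative quasi-isomorphism, hence isomorphism in the relative derived category) as the spread-out resolution $\mathbb{P}_\bullet\to\mathbb{M}'$. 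Once that identification is in place, the statement follows formally, and I would keep the write-up short since the heavy lifting has already been done in Theorems 2.3, 3.6, and 3.12.
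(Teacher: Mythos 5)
Your proposal follows essentially the same route as the paper's own proof: apply Theorem 3.6 in both $\mathbb{A}$-$\mathbf{mod}$ and $\mathbb{A}'$-$\mathbf{mod}$ to identify the $\mathbf{Ext}$ groups with morphism sets in the relative derived categories, then invoke the full-and-faithfulness of $\mathbf{d}^*$ to conclude. The compatibility and naturality checks you flag are real points that the paper leaves implicit, but they do not change the argument.
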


\begin{proof} Theorem 3.6 implies that we have the isomorphisms \begin{center}$ \mathbf{Ext}^{i}_{\mathbb{A},\mathbf{k}}(\mathbb{M},\mathbb{N})\simeq
Mor_{\mathcal{D}^{-}_{k}(\mathbb{A}-\mathbf{mod})}
(\mathbb{M}_{\bullet},\mathbb{N}_{\bullet}{[i]})$  $\mathbf{Ext}^{i}_{\mathbb{A'},\mathbf{k}}(\mathbb{M}',\mathbb{N}')\simeq
Mor_{\mathcal{D}^{-}_{k}(\mathbb{A}'-\mathbf{mod})}
(\mathbb{M}'_{\bullet},\mathbb{N}'_{\bullet}{[i]})$\end{center}
Since $\mathbf{d}^*$ is full and faithful we get the desired isomorphism.

\end{proof}

\section{The General Cohomology Comparison Theorem}

To each diagram of algebras $\mathbb{A}$ over a $\mathbf{poset}$ $\mathcal{C}$, M. Gerstenhaber and S. D. Schack
associated a single algebra $\mathbb{A}!=$ of the row-finite
$\mathcal{C}\times\mathcal{C}$ matrices $(a_{ij})$ with
$a_{ij}\in\mathbb{A}^i$ if $i\leq j$ and $a_{ij}=0$ otherwise. The
addition is componentwise and the multiplication
$(a_{ij})(b_{ij})=(c_{ij})$ is induced by the matrix multiplication
with the understanding that, for $h\leq i\leq j$, the summand
$a_{hi}b_{ij}$ of $c_{hj}$ is regarded as
$a_{hi}b_{ij}=a_{hi}\varphi^{hi}(b_{ij})$. For our purpose it is
convenient to use the equivalent representation
$\mathbb{A!}=\prod_{i\in\mathcal{C}}\coprod_{i\leq
j}\mathbb{A}^i\varphi^{ij}$, as $k$-bimodule. Here $\varphi^{ij}$
serve to distinguish distinct copies of $\mathbb{A}^i$ from one
another. The general element of $\mathbb{A}^i\varphi^{ij}$ will be
denoted $a^{i}\varphi^{ij}$. The multiplication is defined
componentwise and subject to the rule:
$(a^h\varphi^{hi})(a^j\varphi^{jl})=a^h\varphi^{hi}(a^j)\varphi^{hl}$
if $i=j$ and $0$ otherwise.

Let $1_i$ the unit element of $\mathbb{A}^i$. Since
$(a^h\varphi^{hi})(1_i\varphi^{ij})=a^h\varphi^{hj}$ and
$(1_i\varphi^{hi})(a^i\varphi^{ij})=\varphi^{hi}(a^i)\varphi^{hj}$
we may abbreviate $1_i\varphi^{ij} $ to $\varphi^{ij}$. The maps
$\varphi^{ij}$ are then elements of $\mathbb{A}!$ and
$\varphi^{hi}\varphi^{ij}=\varphi^{hj}$;
$\varphi^{hi}\varphi^{jl}=0$ if $i\neq j$.

M. Gerstenhaber and S. D. Schack defined the functor
\begin{center}$!:\mathbb{A}^e-\mathbf{mod}\longrightarrow(\mathbb{A}!)^e-\mathbf{mod}$, such
that $\mathbb{A}\longrightarrow\mathbb{A}!$\end{center} by setting for any
$\mathbb{A}^e$-module $\mathbb{M}$,
\begin{center}$\mathbb{M}!=\prod_{i\in\mathcal{C}}\coprod_{i\leq
j}\mathbb{M}^i\varphi^{ij}$ as a $k$-bimodule.\end{center} The actions of
$\mathbb{A}!$ are defined by:
$$(a^h\varphi^{hi})(m^i\varphi^{ij})=a^hT_{\mathbb{M}}^{hi}(m^i)\varphi^{hj}$$
$$(m^h\varphi^{hi})(a^i\varphi^{ij})=m^h\varphi^{hi}(a^i)\varphi^{hj}$$
$$(a^h\varphi^{hi})(m^j\varphi^{jl})=0=(m^h\varphi^{hi})(a^j\varphi^{jl}),
\mathrm{if} i\neq j.$$ For $\eta\in
Hom_{\mathbb{A}^e}(\mathbb{N}, \mathbb{M})$ define
$\eta!\in Hom_{(\mathbb{A}!)^e}(\mathbb{N}!, \mathbb{M}!)$
by $\eta!(n^i\varphi^{ij})=\eta^i(n^i)\varphi^{ij}$.

The functor ! induces a functor between the relative derived categories $\mathcal{D}_k^{-}(\mathbb{A}^e-\mathbf{mod})$ and  $\mathcal{D}_k^{-}((\mathbb{A}!)^e-\mathbf{mod})$. We proved the following theorem about the induced functor.  (see [5] or [6])
\begin{theorem}
The functor $\xymatrix{\mathcal{D}_k^{-}
({\mathbb{A}^e}-{\mathbf{mod}})\ar[r]^(.45){!}&\mathcal{D}_k^{-}
({(\mathbb{A!})^e}-{\mathbf{mod}})}$ is full and faithful. That is,
$$\xymatrix{ Mor_{\mathcal{D}_k^{-}
({\mathbb{A}^e}-{\mathbf{mod}})}(\mathbb{M}_{\bullet},
\mathbb{N}_{\bullet})\ar[r]^(.48){!}&Mor_{\mathcal{D}_k^{-}
({(\mathbb{A!})^e}-{\mathbf{mod}})}(\mathbb{M}_{\bullet}!,
\mathbb{N}_{\bullet}!)}$$ is an isomorphism of sets for all
$\mathbb{M}_\bullet,
\mathbb{N}_\bullet\in\mathcal{D}_{k}^{-}(\mathbb{A}^e-\mathbf{mod})$.
\end{theorem}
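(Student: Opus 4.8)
The plan is to carry over, essentially verbatim, the proof of Theorem~3.9 (full and faithfulness of $\mathbf{d}^*$) to the functor $!$; three ingredients make this possible. (i) The functor $!:\mathbb{A}^e$-$\mathbf{mod}\to(\mathbb{A}!)^e$-$\mathbf{mod}$ is exact and sends allowable maps to allowable maps --- its underlying functor of $k$-modules is, objectwise, a product of coproducts of copies of the identity --- so it preserves relative quasi-isomorphisms and descends to a functor of relative derived categories. (ii) The functor $!$ is full and faithful on the module categories and is part of an adjunction with a functor $R:(\mathbb{A}!)^e$-$\mathbf{mod}\to\mathbb{A}^e$-$\mathbf{mod}$ (a ``restriction along the idempotents $\varphi^{ii}$''), so that the structure map $\varepsilon_\mathbb{M}:R(\mathbb{M}!)\to\mathbb{M}$ supplied by the adjunction is a natural isomorphism; one extends $R$, $\varepsilon$ and the corresponding unit $\eta$ termwise to complexes. (iii) The substantial ingredient is the exact analogue of Theorem~2.3 together with Proposition~3.7 and Theorem~3.8: for every $\mathbb{M}_\bullet\in\mathcal{D}_k^{-}(\mathbb{A}^e$-$\mathbf{mod})$ there is a complex $\mathcal{U}\mathbb{M}_\bullet$ of relative projective $(\mathbb{A}!)^e$-modules together with a relative quasi-isomorphism $\varepsilon:\mathcal{U}\mathbb{M}_\bullet\to\mathbb{M}_\bullet!$ such that $R(\varepsilon)$ is again a relative quasi-isomorphism, now onto $\mathbb{M}_\bullet$; when $\mathbb{M}_\bullet$ is concentrated in degree zero this is a relative projective allowable resolution of $\mathbb{M}!$ whose image under $R$ is a relative projective allowable resolution of $\mathbb{M}$, and for a general complex one totalizes the associated double complex, verifying the hypotheses of Proposition~3.7 exactly as in the proof of Theorem~3.8.

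Granting these, fullness and faithfulness are pure roof calculus, formally identical to the proof of Theorem~3.9. For fullness: given a roof over $(\mathbb{A}!)^e$ with apex $\mathbb{X}_\bullet$, left leg a relative quasi-isomorphism $s:\mathbb{X}_\bullet\to\mathbb{M}_\bullet!$, and right leg $f:\mathbb{X}_\bullet\to\mathbb{N}_\bullet!$, use Proposition~3.5 to lift $\varepsilon$ through $s$ to a map $q:\mathcal{U}\mathbb{M}_\bullet\to\mathbb{X}_\bullet$ with $qs=\varepsilon$ --- necessarily a relative quasi-isomorphism --- so that the given roof is equivalent to the one with apex $\mathcal{U}\mathbb{M}_\bullet$ and legs $\varepsilon$, $fq$; apply $R$ to obtain a roof over $\mathbb{A}^e$ whose left leg $\varepsilon_{\mathbb{M}_\bullet}\,R(\varepsilon)$ is a relative quasi-isomorphism by (iii); and finally, using the triangle identities for $\eta$ and $\varepsilon$ precisely as in the displayed diagram of Theorem~3.9, check that $!$ sends this last roof back to the original one up to equivalence. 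For faithfulness: if two roofs over $\mathbb{A}^e$ have $!$-images equivalent in $\mathcal{D}_k^{-}((\mathbb{A}!)^e$-$\mathbf{mod})$, witness the equivalence by a complex $\mathbb{X}_\bullet$ with a relative quasi-isomorphism onto the left-hand apex $\mathbb{S}_\bullet!$, replace $\mathbb{X}_\bullet$ by $\mathcal{U}\mathbb{S}_\bullet$ using Proposition~3.5 once more, apply $R$, and conclude from the fact that the maps $\varepsilon_{\mathbb{R}_\bullet}$ are isomorphisms that the two roofs over $\mathbb{A}^e$ were already equivalent.

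The hard part is the transfer statement (iii): since $R$ (like $\mathbf{d}_!$) is not exact in general, an arbitrary relative projective resolution of $\mathbb{M}!$ over the single algebra $(\mathbb{A}!)^e$ is useless --- one must produce one that ``survives'' the passage through $R$. This is precisely the content of the $Special$ $Cohomology$ $Comparison$ $Theorem$ of~[5] and~[6]: one ``spreads out'' the $\mathbf{GSB}$ resolution of $\mathbb{M}$ over the poset $\mathcal{C}$, builds the $(\mathbb{A}!)^e$-action from the structure maps $\varphi^{ij}$, and checks that each row of the resulting double complex is $k$-contractible with contracting homotopies compatible in the sense of Proposition~3.7, so that its total complex is relative quasi-isomorphic to $\mathbb{M}_\bullet!$ and is carried by $R$ to the $\mathbf{GSB}$ resolution of $\mathbb{M}_\bullet$. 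Once this combinatorial and homological construction is in hand, the rest is the formal argument already run for $\mathbf{d}^*$ in Section~3.
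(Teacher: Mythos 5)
The first thing to note is that the paper does not actually prove Theorem 4.1: it is quoted from [5] and [6] (``We proved the following theorem\dots see [5] or [6]''), so there is no in-paper argument to compare yours against. What the paper does supply is the template you correctly identified, namely the proof of Theorem 3.9, and your proposal reproduces that roof calculus faithfully: exactness and preservation of allowability give descent to the relative derived categories; Proposition 3.5 lets you replace the apex of any roof by a complex of relative projectives mapping quasi-isomorphically to $\mathbb{M}_\bullet!$; and unit/counit bookkeeping for an adjunction between $!$ and a reverse functor $R$ transports roofs back to $\mathcal{D}_k^{-}(\mathbb{A}^e$-$\mathbf{mod})$. Structurally this is exactly how the author argues for $\mathbf{d}^*$, and it is very plausibly how [5] and [6] argue for $!$.

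The gap is that the two inputs carrying all the weight are asserted rather than established, and in context the assertion is circular: you attribute the transfer statement (iii) to ``the Special Cohomology Comparison Theorem of [5] and [6]'', but Theorem 4.1 \emph{is} the theorem of [5] and [6] whose corollary is the SCCT, so those references cannot be invoked to prove it. Concretely, two things must be built from scratch. First, the reverse functor $R$: for left modules the idempotent restriction $X\mapsto\varphi^{ii}X$ works, but for $(\mathbb{A}!)^e$-modules the naive $\varphi^{ii}X\varphi^{ii}$ does not obviously carry the structure maps of an $\mathbb{A}^e$-module with the correct variance, and the triangle identities you use in the fullness step (the analogue of $\mathbf{d}^*(\varepsilon_{\mathbb{M}_\bullet})\eta_{\mathbf{d}^*(\mathbb{M}_\bullet)}=id$) must be verified for whatever $R$ one settles on --- $!$ is not of the form $f^*$ for a functor of underlying index categories, so none of the adjunction machinery of Section 1 applies off the shelf. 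Second, the analogue of Theorem 2.3: a relative projective allowable resolution of $\mathbb{M}!$ in $(\mathbb{A}!)^e$-$\mathbf{mod}$ that $R$ carries to one of $\mathbb{M}$, with row contractions compatible in the sense of Proposition 3.7. Your description of ``spreading out the $\mathbf{GSB}$ resolution and building the $(\mathbb{A}!)^e$-action from the $\varphi^{ij}$'' is the right idea, but that construction and its verification are precisely the content of the theorem, and they are absent. As an outline of the intended argument your proposal is accurate; as a proof it reduces Theorem 4.1 to itself.
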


As a corollary we obtained the \begin{it} Special Cohomology Comparison Theorem \end{it} due to M. Gerstenhaber and S. D. Schack (see [3]).
\begin{corollary}{(Special Cohomology Comparison Theorem)}\\
The functor ! induces an isomorphism of relative Yoneda cohomologies
$$Ext^{\bullet}_{\mathbb{A}^e, \mathbf{k}}((-),(-))\cong
Ext^{\bullet}_{(\mathbb{A}!)^e, \mathbf{k}}((-)!,(-)!).$$ In particular,
we have an isomorphism of relative Hochschild cohomologies
$$\mathbf{H}^{\bullet}(\mathbb{A},(-))\cong \mathbf{H}^{\bullet}(\mathbb{A}!,(-)!).$$
\end{corollary}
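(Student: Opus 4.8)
The plan is to deduce this corollary from Theorem 4.1 by exactly the formal argument that passes from Theorem 3.6 to Corollary 3.10 (the Invariance Theorem). The only point requiring a word of justification is that Theorem 3.6 applies not merely to $\mathbb{A}$-$\mathbf{mod}$ but also to $\mathbb{A}^e$-$\mathbf{mod}$ and to $(\mathbb{A}!)^e$-$\mathbf{mod}$: the former is the module category of the diagram $\mathbb{A}^e$ over $\mathcal{C}$, and the latter is the module category of the single algebra $(\mathbb{A}!)^e$, i.e. the case $\mathcal{C}=\mathbf{1}$ of the entire construction, in which relative quasi-isomorphisms are the $k$-split quasi-isomorphisms and the $\mathbf{GSB}$ resolution is the ordinary relative bar resolution. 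With this understood, Theorem 3.6 supplies natural identifications
\[
\mathbf{Ext}^{i}_{\mathbb{A}^e,\mathbf{k}}(\mathbb{M},\mathbb{N})\;\cong\;Mor_{\mathcal{D}^{-}_{k}(\mathbb{A}^e-\mathbf{mod})}(\mathbb{M}_{\bullet},\mathbb{N}_{\bullet}[i])
\]
and
\[
\mathbf{Ext}^{i}_{(\mathbb{A}!)^e,\mathbf{k}}(\mathbb{M}!,\mathbb{N}!)\;\cong\;Mor_{\mathcal{D}^{-}_{k}((\mathbb{A}!)^e-\mathbf{mod})}(\mathbb{M}_{\bullet}!,\mathbb{N}_{\bullet}![i]),
\]
the naturality being inherited from the functoriality of the $\mathbf{GSB}$ construction and of the localization.

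Next I would splice these against Theorem 4.1. Since the functor $!$ is additive and acts on a complex one term at a time, it commutes with the translation, so $(\mathbb{N}_{\bullet}[i])!=(\mathbb{N}_{\bullet}!)[i]$ in $Kom^{-}((\mathbb{A}!)^e-\mathbf{mod})$; hence Theorem 4.1 furnishes a bijection $Mor_{\mathcal{D}^{-}_{k}(\mathbb{A}^e-\mathbf{mod})}(\mathbb{M}_{\bullet},\mathbb{N}_{\bullet}[i])\longrightarrow Mor_{\mathcal{D}^{-}_{k}((\mathbb{A}!)^e-\mathbf{mod})}(\mathbb{M}_{\bullet}!,\mathbb{N}_{\bullet}![i])$, natural in $\mathbb{M}$ and in $\mathbb{N}$. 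Composing the three isomorphisms and letting $i$ range over all non-negative integers yields the asserted isomorphism $Ext^{\bullet}_{\mathbb{A}^e,\mathbf{k}}((-),(-))\cong Ext^{\bullet}_{(\mathbb{A}!)^e,\mathbf{k}}((-)!,(-)!)$ of relative Yoneda cohomologies.

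For the Hochschild statement I would observe that applying $!$ to the bimodule $\mathbb{A}$ itself — that is, to $\mathbb{A}$ regarded as an object of $\mathbb{A}^e$-$\mathbf{mod}$ with structure maps $T^{v}_{\mathbb{A}}=\varphi^{v}_{\mathbb{A}}$ — reproduces, on comparing the defining formula $\mathbb{M}!=\prod_{i}\coprod_{i\le j}\mathbb{M}^i\varphi^{ij}$ together with its action rules (taken with $\mathbb{M}=\mathbb{A}$) against the definition of the algebra $\mathbb{A}!$, exactly the regular $(\mathbb{A}!)^e$-module $\mathbb{A}!$; in symbols $(\mathbb{A})!=\mathbb{A}!$. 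Since relative Hochschild cohomology is by definition $\mathbf{H}^{\bullet}(\mathbb{A},\mathbb{M})=\mathbf{Ext}^{\bullet}_{\mathbb{A}^e,\mathbf{k}}(\mathbb{A},\mathbb{M})$ and $\mathbf{H}^{\bullet}(\mathbb{A}!,\mathbb{M}!)=\mathbf{Ext}^{\bullet}_{(\mathbb{A}!)^e,\mathbf{k}}(\mathbb{A}!,\mathbb{M}!)$, the Hochschild isomorphism is merely the special case $\mathbb{N}=\mathbb{A}$ of the Yoneda isomorphism just established.

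I do not expect a genuine obstacle here: the entire mathematical content of the comparison is carried by Theorem 4.1, and the corollary is a formal consequence of it together with the dictionary of Theorem 3.6. The only items that really need checking are that Theorem 3.6 and the localization machinery of Section 3 transfer verbatim to $\mathbb{A}^e$-$\mathbf{mod}$ and to the single-algebra case $(\mathbb{A}!)^e$-$\mathbf{mod}$ (they do), that $!$ respects the shift (immediate), and that $(\mathbb{A})!=\mathbb{A}!$ as regular bimodules (immediate from the definitions). If in addition one wanted the isomorphism to be multiplicative, i.e. an isomorphism of graded rings for the Yoneda/cup product, one would also have to verify that $!$ is compatible with composition of roofs and with Yoneda splicing; but the statement asks only for an isomorphism of graded $k$-modules, so this extra step is not needed.
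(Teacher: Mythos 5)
Your proposal is correct and follows exactly the route the paper intends: the paper states this corollary without an explicit proof (deferring to [5] and [6]), but its derivation is the same formal argument the paper spells out for Corollary 3.10, namely combining the identification $\mathbf{Ext}^{i}_{\mathbb{A},\mathbf{k}}\cong Mor_{\mathcal{D}^{-}_{k}}$ of Theorem 3.6 (applied to $\mathbb{A}^e$-$\mathbf{mod}$ and to the single-algebra case $(\mathbb{A}!)^e$-$\mathbf{mod}$) with the full faithfulness of the functor in question (here Theorem 4.1). Your additional checks --- that $!$ commutes with the shift and that $(\mathbb{A})!=\mathbb{A}!$ as the regular bimodule --- are exactly the small verifications needed to pass to the Hochschild statement.
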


As we noted in proposition 2.1, the second subdivision of a small category is always a poset. Therefore, by combining theorems 3.9 and 4.1, we get the main result of this paper in the form of the following theorem.

\begin{theorem} Let $\mathcal{C}$ be an arbitrary small category and $\mathbb{A}$ be a diagram over $\mathcal{C}$. Then, the functor
$$\mathbb{M}_\bullet\longrightarrow(\mathbb{M}''_\bullet)!$$ between the categories $\mathcal{D}_k^{-}(\mathbb{A}^e-\mathbf{mod})$ and
$\mathcal{D}_k^{-}((\mathbb{A}''!)^e-\mathbf{mod})$ is full and faithful.
That is, the natural map $$\xymatrix{ Mor_{\mathcal{D}_k^{-}
({\mathbb{A}^e}-{\mathbf{mod}})}(\mathbb{M}_{\bullet},
\mathbb{N}_{\bullet})\ar[r]^(.45){(-)''!}&Mor_{\mathcal{D}_k^{-}
({(\mathbb{A}''!)^e}-{\mathbf{mod}})}((\mathbb{M})''_{\bullet}!,
(\mathbb{N})''_{\bullet}!)}$$ is an isomorphism of sets for all
$\mathbb{M}_\bullet,
\mathbb{N}_\bullet\in\mathcal{D}_{k}^{-}(\mathbb{A}^e-\mathbf{mod})$
\end{theorem}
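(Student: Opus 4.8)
The plan is to realize the functor $\mathbb{M}_\bullet\mapsto(\mathbb{M}''_\bullet)!$ as a composite of full and faithful functors between relative derived categories, and then invoke that a composite of full and faithful functors is again full and faithful.

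First I would note that Theorem 3.9, although stated for $\mathbb{A}$-$\mathbf{mod}$, holds for any diagram of $k$-algebras over any small category, and in particular for the enveloping diagram $\mathbb{A}^e$ over $\mathcal{C}$. The one point to record is that subdivision commutes with the enveloping construction: for a diagram $\mathbb{B}$ over a small category $\mathcal{D}$ one has $(\mathbb{B}^e)'=(\mathbb{B}')^e$, since both diagrams over $\mathcal{D}'$ assign to a simplex $\tau$ the algebra $\mathbb{B}^{\mathbf{d}\tau}\otimes_k(\mathbb{B}^{\mathbf{d}\tau})^{op}$; and under the identification $\mathbb{B}$-$\mathbf{bimod}\cong\mathbb{B}^e$-$\mathbf{mod}$ the subdivision functor $(-)'$ on bimodules is precisely $\mathbf{d}^*$ on $\mathbb{B}^e$-modules. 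Applying Theorem 3.9 first to $\mathbb{A}^e$ over $\mathcal{C}$ and then to $(\mathbb{A}^e)'=(\mathbb{A}')^e$ over $\mathcal{C}'$ (so that the second target is $((\mathbb{A}')^e)'=(\mathbb{A}'')^e$), we obtain that the two arrows in
$$\mathcal{D}_k^{-}(\mathbb{A}^e-\mathbf{mod})\ \longrightarrow\ \mathcal{D}_k^{-}((\mathbb{A}')^e-\mathbf{mod})\ \longrightarrow\ \mathcal{D}_k^{-}((\mathbb{A}'')^e-\mathbf{mod}),$$
both given by $\mathbf{d}^*$, are full and faithful, so the iterated subdivision $(-)'':\mathbb{M}_\bullet\mapsto\mathbb{M}''_\bullet$ is full and faithful. (Each $\mathbf{d}^*$ descends to the relative derived categories because it is exact and preserves allowability, hence relative quasi-isomorphisms, as recorded before Theorem 3.9.)

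Next I would invoke Proposition 2.1: for any small category $\mathcal{C}$ the second subdivision $\mathcal{C}''$ is a poset. Hence $\mathbb{A}''$ is a diagram of algebras over a poset, so Theorem 4.1 applies to $\mathbb{A}''$ and yields that
$$!:\mathcal{D}_k^{-}((\mathbb{A}'')^e-\mathbf{mod})\ \longrightarrow\ \mathcal{D}_k^{-}((\mathbb{A}''!)^e-\mathbf{mod})$$
is full and faithful; this functor also descends to the relative derived categories, as noted before Theorem 4.1.

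Finally I would compose. The functor in the statement is precisely the iterated subdivision $(-)''$ followed by the functor $!$ associated with the diagram $\mathbb{A}''$; each factor is full and faithful by the two preceding paragraphs, hence so is the composite, which is the assertion, and reading it off on morphism sets gives the displayed isomorphism
$$Mor_{\mathcal{D}_k^{-}(\mathbb{A}^e-\mathbf{mod})}(\mathbb{M}_\bullet,\mathbb{N}_\bullet)\ \cong\ Mor_{\mathcal{D}_k^{-}((\mathbb{A}''!)^e-\mathbf{mod})}((\mathbb{M})''_\bullet!,(\mathbb{N})''_\bullet!).$$
I do not expect a serious obstacle: the substance is entirely contained in Theorems 3.9 and 4.1, and what remains is the routine bookkeeping that subdivision commutes with the functor $(-)^e$ and that the functor in the statement is literally this triple composite.
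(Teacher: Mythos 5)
Your proposal is correct and follows exactly the route the paper takes: the paper's entire argument for Theorem 4.3 is the sentence preceding it, namely that $\mathcal{C}''$ is a poset by Proposition 2.1, so the result follows by combining Theorems 3.9 (applied to the enveloping diagrams to get that $(-)''$ is full and faithful on the relative derived categories) and 4.1 (applied to $\mathbb{A}''$). Your added verification that subdivision commutes with the enveloping construction, $(\mathbb{B}^e)'=(\mathbb{B}')^e$, is exactly the bookkeeping the paper leaves implicit.
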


As a corollary we obtain the \begin{it} General Cohomology Comparison Theorem\end{it}  of M. Gerstenhaber and S. D. Schack.

\begin{corollary} \begin{bf}  General Cohomology Comparison Theorem\end{bf}

Let $\mathcal{C}$ be a small category and $\mathbb{A}$ be a diagram over $\mathcal{C}$. Then the functor $$\mathbb{M}_\bullet\longrightarrow(\mathbb{M}''_\bullet)!$$ between the categories $\mathbb{A}^e-\mathbf{mod}$ and $(\mathbb{A}'')!^e-\mathbf{mod}$ is full and faithful. The induced map $$\xymatrix{\mathbf{Ext}^\bullet_{\mathbb{A}^e, \mathbf{k}}((-), (-))\ar[r]&\mathbf{Ext}^\bullet_{(\mathbb{A}'')!^e, \mathbf{k}, }((-)''!, (-)''!)},$$ $[\mathcal{E}]\longrightarrow[\mathcal{E''!}]$ is an isomorphism. In particular, there is an isomorphism of  Hochschild cohomologies
$$\mathbf{H}^{\bullet}(\mathbb{A},(-))\cong \mathbf{H}^{\bullet}((\mathbb{A}'')!,(-)''!).$$
\end{corollary}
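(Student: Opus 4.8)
The plan is to deduce the Corollary from Theorem 4.3 by transporting it through the identification, supplied by Theorem 3.6, of relative Yoneda cohomology with morphism sets in the relative derived category. First observe that the termwise functor $(-)''!$ is the composite of two applications of $\mathbf{d}^*$ with one application of $!$; each of these is additive, exact, and preserves allowable maps, hence preserves relative quasi-isomorphisms and descends to the relative derived categories (this is recorded before Theorem 3.9 for $\mathbf{d}^*$ and before Theorem 4.1 for $!$). Since all constructions are performed objectwise, $\mathbf{d}^*$ commutes with $(-)\otimes_k(-)$ and with $(-)^{op}$, so that $(\mathbb{A}^e)'' = (\mathbb{A}'')^e$ and Theorem 4.1 applies to the diagram $\mathbb{A}''$ over the poset $\mathcal{C}''$. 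The full-and-faithfulness asserted by the Corollary is, at the level of the relative derived categories, exactly Theorem 4.3; at the level of the module categories it is the degree-zero instance of the $\mathbf{Ext}$ isomorphism below, since relative $\mathbf{Ext}^0$ coincides with $\mathrm{Hom}$.

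For the $\mathbf{Ext}$ statement, fix $\mathbb{A}^e$-modules $\mathbb{M}$ and $\mathbb{N}$. Theorem 3.6, applied to the diagram $\mathbb{A}^e$ over $\mathcal{C}$ and then to the diagram $(\mathbb{A}''!)^e$, gives isomorphisms
$$\mathbf{Ext}^i_{\mathbb{A}^e,\mathbf{k}}(\mathbb{M},\mathbb{N}) \simeq Mor_{\mathcal{D}_k^{-}(\mathbb{A}^e-\mathbf{mod})}(\mathbb{M}_\bullet,\mathbb{N}_\bullet[i])$$
and
$$\mathbf{Ext}^i_{(\mathbb{A}''!)^e,\mathbf{k}}(\mathbb{M}''!,\mathbb{N}''!) \simeq Mor_{\mathcal{D}_k^{-}((\mathbb{A}''!)^e-\mathbf{mod})}(\mathbb{M}''_\bullet!,\mathbb{N}''_\bullet![i]).$$
Because $(-)''$ and $!$ are additive and exact, applying them termwise to a complex commutes with the shift, so $(\mathbb{N}_\bullet[i])''! = (\mathbb{N}''_\bullet!)[i]$; Theorem 4.3 then makes the two right-hand sides isomorphic, and the composite is the asserted isomorphism $\mathbf{Ext}^\bullet_{\mathbb{A}^e,\mathbf{k}}((-),(-)) \cong \mathbf{Ext}^\bullet_{(\mathbb{A}''!)^e,\mathbf{k}}((-)''!,(-)''!)$.

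The step requiring genuine care — and which I expect to be the main obstacle — is to verify that this composite isomorphism is precisely the map $[\mathcal{E}]\mapsto[\mathcal{E}''!]$ obtained by applying $(-)''!$ termwise to a Yoneda $n$-extension. This is a naturality assertion for the isomorphism of Theorem 3.6 with respect to $(-)''!$: that isomorphism is built by representing a class by an allowable exact $n$-extension of $\mathbb{M}$ by $\mathbb{N}$ and comparing it with the $\mathbf{GSB}$ resolution of $\mathbb{M}$, so one must track how $(-)''!$ interacts with that comparison. The right framework is to use the explicit resolutions of Theorems 2.3 and 3.8, which are precisely the $\mathbf{GSB}$-type complexes of relative projectives that behave predictably under subdivision and under the inflation functor $!$, together with Proposition 3.5, which guarantees that the morphism produced in $\mathcal{D}_k^{-}$ is insensitive to this choice of resolution — in particular it does no harm that $(-)''!$ applied termwise to the $\mathbf{GSB}$ resolution of $\mathbb{M}$ is generally not the $\mathbf{GSB}$ resolution of $\mathbb{M}''!$. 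Writing out the resulting commutative square is routine bookkeeping, but it is the one substantive point the Corollary adds to Theorems 3.6 and 4.3, and it runs parallel to the (brief) proof of Corollary 3.10, the Invariance Theorem.

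Finally, the Hochschild statement is the specialization $\mathbb{N} = \mathbb{A}$ of the $\mathbf{Ext}$ isomorphism, combined with $\mathbf{H}^\bullet(\mathbb{A},(-)) = \mathbf{Ext}^\bullet_{\mathbb{A}^e,\mathbf{k}}(\mathbb{A},(-))$. The only point to check is that $\mathbb{A}''!$, regarded as a module over $(\mathbb{A}''!)^e$ via $(-)''!$, is the diagonal bimodule of the algebra $(\mathbb{A}'')!$; this is immediate from the definitions, since $\mathbf{d}^*$ sends the diagonal $\mathbb{A}^e$-module $\mathbb{A}$ to the diagonal $(\mathbb{A}')^e$-module $\mathbb{A}'$, and $!$ sends the diagonal $\mathbb{A}^e$-module to the algebra $\mathbb{A}!$ viewed as a bimodule over itself. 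Hence $\mathbf{Ext}^\bullet_{(\mathbb{A}''!)^e,\mathbf{k}}(\mathbb{A}''!,(-)''!) = \mathbf{H}^\bullet((\mathbb{A}'')!,(-)''!)$, which yields the claimed isomorphism $\mathbf{H}^\bullet(\mathbb{A},(-)) \cong \mathbf{H}^\bullet((\mathbb{A}'')!,(-)''!)$.
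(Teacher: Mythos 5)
Your proposal is correct and follows the route the paper intends: the paper gives no explicit proof of this corollary, presenting it as an immediate consequence of Theorem 4.3 via the identification of relative Yoneda cohomology with derived-category morphism sets in Theorem 3.6, exactly parallel to the two-line proof of Corollary 3.10. Your additional attention to the naturality of the $\mathbf{Ext}$ identification under $(-)''!$ and to the diagonal-bimodule point for the Hochschild statement only fills in details the paper leaves implicit.
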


\newpage

\end{document}